\newtheorem{thm}{Theorem}[section]
\newtheorem{cor}[thm]{Corollary}
\newtheorem{lem}[thm]{Lemma}
\newtheorem{exm}[thm]{Example}
\newtheorem{prop}[thm]{Proposition}
\theoremstyle{definition}
\theoremstyle{remark}
\newtheorem{rem}[thm]{\bf Remark}
\numberwithin{equation}{section}
\begin{document}
\title[The singularity category of an algebra with radical square zero]
{The singularity category of an algebra with radical square zero}

\author[  Xiao-Wu Chen
] {Xiao-Wu Chen}

\thanks{The author is supported by Special Foundation of President of The Chinese Academy of Sciences
(No.1731112304061) and  National Natural Science Foundation of China (No.10971206).}
\subjclass{18E30, 13E10, 16E50}
\date{April 20, 2011}

\thanks{E-mail:
xwchen$\symbol{64}$mail.ustc.edu.cn}
\keywords{singularity category, von Neumann regular algebra, invertible bimodule, valued quiver}%

\maketitle

\dedicatory{}%
\commby{}%

\begin{abstract}
To an artin algebra with radical square zero, a regular algebra in the sense of von Neumann and
a family of invertible bimodules over the regular algebra are associated. These data describe
completely, as a triangulated category, the singularity category of the artin algebra.
A criterion on the Hom-finiteness of the singularity category is given in terms of
the valued quiver of the artin algebra.
\end{abstract}

\section{Introduction}
Let $R$ be  a commutative artinian ring. All algebras, categories and functors are $R$-linear.
We recall that an $R$-linear category is \emph{Hom-finite} provided that all the Hom sets
are finitely generated $R$-modules.

Let $A$ be an artin $R$-algebra. Denote by $A\mbox{-mod}$ the category of finitely generated
left $A$-modules, and by $\mathbf{D}^b(A\mbox{-mod})$ the bounded derived category.
Following \cite{Or04}, the \emph{singularity category} $\mathbf{D}_{\rm sg}(A)$ is the quotient
triangulated category of $\mathbf{D}^b(A\mbox{-mod})$ with respect to the full subcategory
formed by perfect complexes; see also \cite{Buc, KV, Hap91, Ric, Bel2000} and \cite{Kra}. Here, we recall that a complex in  $\mathbf{D}^b(A\mbox{-mod})$ is \emph{perfect} provided that it is isomorphic to a bounded complex consisting of finitely
generated projective modules.

The singularity category  measures the homological singularity of an algebra in the sense that an algebra $A$ has finite global dimension if and only if its singularity category $\mathbf{D}_{\rm sg}(A)$ vanishes. In the meantime, the singularity
 category captures the stable homological features of an algebra (\cite{Buc}). A fundamental result
of Buchweitz and Happel states that for a Gorenstein algebra $A$, the singularity category $\mathbf{D}_{\rm sg}(A)$ is triangle
equivalent to the stable category of (maximal) Cohen-Macaulay $A$-modules (\cite{Buc, Hap91}). This implies in particular that
the singularity category of a Gorenstein algebra is Hom-finite and has Auslander-Reiten triangles. We point
out that Buchweitz and Happel's result specializes to Rickard's result (\cite{Ric}) on self-injective algebras.  However,
for non-Gorenstein algebras, not much is known about their singularity categories (\cite{Ch09}).

Our aim is to describe the singularity category of an algebra with radical square zero.
We point out that such algebras are usually non-Gorenstein (\cite{Ch11}). In what follows, we describe the
results in this paper.

We denote by $\mathbf{r}$ the Jacobson radical of $A$. The algebra $A$
is said to be with \emph{radical square zero} provided that $\mathbf{r}^2=0$. In this case,
$\mathbf{r}$ has a natural ${A/{\mathbf{r}}}$-${A/\mathbf{r}}$-bimodule structure.
Set $\mathbf{r}^{\otimes 0}={A/{\mathbf{r}}}$ and
 $\mathbf{r}^{\otimes i+1}=\mathbf{r} \otimes_{A/{\mathbf{r}}}(\mathbf{r}^{\otimes i}) $ for $i\geq 0$.
 Then there are obvious algebra homomorphisms ${\rm End}_{A/{\mathbf{r}}}(\mathbf{r}^{\otimes i})\rightarrow {\rm End}_{A/{\mathbf{r}}}(\mathbf{r}^{\otimes i+1})$ induced by $\mathbf{r}\otimes_{A/{\mathbf{r}}}-$. We denote by
 $\Gamma(A)$ the direct limit of this chain of algebra homomorphisms. It is a regular algebra (\cite{Fai73, Fai76}) in the sense of von Neumann. We call $\Gamma(A)$ the \emph{associated regular algebra} of $A$. In most
 cases, the algebra $\Gamma(A)$ is not semisimple.

 For $n\in \mathbb{Z}$ and $i\geq {\rm max}\{0, n\}$, ${\rm Hom}_{A/\mathbf{r}}(\mathbf{r}^{\otimes i}, \mathbf{r}^{\otimes i-n})$
 has a natural ${\rm End}_{A/\mathbf{r}}(\mathbf{r}^{\otimes i-n})$-${\rm End}_{A/\mathbf{r}}(\mathbf{r}^{\otimes i})$-bimodule structure. Set $K^n(A)$ to be the direct limit of the chain of maps ${\rm Hom}_{A/\mathbf{r}}(\mathbf{r}^{\otimes i}, \mathbf{r}^{\otimes i-n})\rightarrow {\rm Hom}_{A/\mathbf{r}}(\mathbf{r}^{\otimes i+1}, \mathbf{r}^{\otimes i+1-n})$, which are
 induced by $\mathbf{r}\otimes_{A/\mathbf{r}}-$. Then $K^n(A)$ is naturally a $\Gamma(A)$-$\Gamma(A)$-bimodule for each $n\in \mathbb{Z}$. Observe that $K^0(A)={_{\Gamma(A)}\Gamma(A)_{\Gamma(A)}}$ as bimodules, and that composition of maps induces $\Gamma(A)$-$\Gamma(A)$-bimodule morphisms $\phi^{n, m}\colon K^n(A)\otimes_{\Gamma(A)}K^m(A)\rightarrow K^{n+m}(A)$ for all $n, m\in \mathbb{Z}$. These bimodules $K^n(A)$
are  called the \emph{associated bimodules} of $A$.

Recall that for an algebra $\Gamma$, a $\Gamma$-$\Gamma$-bimodule $K$ is \emph{invertible} provided that the functor
$K\otimes_\Gamma -$ induces an auto-equivalence on the category of left $\Gamma$-modules.

\vskip 5pt

\noindent{\bf Theorem A.}\; \emph{Let $A$ be an artin algebra with radical square zero. Use the notation
as above. Then the associated $\Gamma(A)$-$\Gamma(A)$-bimodules $K^n(A)$ are invertible and the maps $\phi^{n, m}$
are isomorphisms of bimodules. }

\vskip 5pt

Since the algebra $\Gamma(A)$ is regular, the category ${\rm proj}\; \Gamma(A)$ of finitely generated right
projective $\Gamma(A)$-module is a semisimple abelian category. The invertible bimodule $K^1(A)$ induces an
auto-equivalence $\Sigma_A= -\otimes_{\Gamma(A)}K^1(A)\colon {\rm proj}\; \Gamma(A)\rightarrow {\rm proj}\; \Gamma(A)$. We observe
that the category ${\rm proj}\; \Gamma(A)$ has a unique triangulated structure with $\Sigma_A$ its shift functor; see Lemma \ref{lem:abelian}. This unique triangulated category is denoted by $({\rm proj}\; \Gamma(A), \Sigma_A)$.

The following result describes the singularity category of an artin algebra with radical square zero, which is
based on a result by Keller and Vossieck (\cite{KV}).

\vskip 5pt

\noindent {\bf Theorem B.}\; \emph{Let $A$ be an artin algebra with radical square zero.
Use the notation as above. Then we have a triangle equivalence
$$\mathbf{D}_{\rm sg}(A) \simeq ({\rm proj}\; \Gamma(A), \Sigma_A).$$}

We are interested in the Hom-finiteness of singularity categories. For this, we recall the notion of
valued quiver of an artin algebra $A$. Choose a complete set of
representatives of pairwise non-isomorphic simple $A$-modules
$\{S_1, S_2, \cdots, S_n\}$. Set $\Delta_i={\rm End}_A(S_i)$; they are division algebras. Observe that ${\rm Ext}^1_A(S_i,
S_j)$ has a natural $\Delta_j$-$\Delta_i$-bimodule structure. The
\emph{valued quiver} $Q_A$ of $A$ is defined as follows: its
vertex set is $\{S_1, S_2, \cdots, S_n\}$, here we identify each
simple module $S_i$ with its isoclass; there is an arrow from $S_i$ to $S_j$
whenever ${\rm Ext}_A^1(S_i, S_j)\neq 0$, in which case the arrow is
endowed with a \emph{valuation} $({\rm dim}_{\Delta_j} \; {\rm
Ext}^1_A(S_i, S_j), {\rm dim}_{{\Delta_i}^{\rm op}} \; {\rm
Ext}_A^1(S_i, S_j) )$; here ${\Delta_i}^{\rm op}$ denotes the
opposite algebra of $\Delta_i$. We say that the valuation of $Q_A$
is \emph{trivial} provided that all the valuations are $(1, 1)$. Recall that
a vertex in a valued quiver is a source (\emph{resp.} sink) provided that there is no
arrow ending (\emph{resp.} starting) at  it. For a valued quiver, to adjoin a (new)
source (\emph{resp.} sink) is to add a vertex together with some valued
arrows starting (\emph{resp.} ending) at this vertex.  For details, we refer to
\cite[III.1]{ARS}.

The following result characterizes the Hom-finiteness of the singularity category in terms of
valued quivers.
\vskip 5pt

\noindent{\bf Theorem C.}\; \emph{Let $A$ be an artin algebra with radical square zero. Then the following statements
are equivalent:
\begin{enumerate}
\item the singularity category $\mathbf{D}_{\rm sg}(A)$ is Hom-finite;
\item the associated regular algebra $\Gamma(A)$ is semisimple;
\item the valued quiver $Q_A$ is obtained from a disjoint union of oriented cycles with the trivial valuation
by repeatedly adjoining sources or sinks.
\end{enumerate}}

\vskip 5pt

The paper is structured as follows. In Section 2, we collect some basic facts on  singularity categories and recall a
basic result of Keller and Vossieck. We prove Theorem A and B in Section 3, where an explicit example is presented. In Section 4, we prove that one-point extensions and coextensions of algebras preserve their singularity categories. We introduce the notion
of cyclicization of an algebra, which is used in the proof of Theorem C in Section 5.

For artin algebras, we refer to \cite{ARS}. For triangulated categories, we refer to \cite{Har66} and \cite{Hap88}.

\section{Preliminaries}

In this section, we collect some facts on singularity categories of artin algebras. We recall a basic result
due to Keller and Vossieck (\cite{KV}), which is applied to $\Omega^\infty$-finite algebras.

Let $A$ be an artin algebra over a commutative artinian ring $R$. Recall that
$A\mbox{-mod}$ denotes the category of finitely generated left $A$-modules. We denote by
$A\mbox{-proj}$ the full subcategory formed by projective modules, and by $A\mbox{-\underline{mod}}$
the stable category of $A\mbox{-mod}$ modulo projective modules (\cite[p.104]{ARS}). The morphism space $\underline{\rm Hom}_A(M, N)$ of two modules $M$ and $N$ in $A\mbox{-\underline{mod}}$ is defined to be ${\rm Hom}_A(M, N)/\mathbf{p}(M, N)$, where $\mathbf{p}(M, N)$ denotes the $R$-submodule formed by morphisms that factor through projective modules.

Recall that for an $A$-module $M$, its syzygy $\Omega(M)$ is the kernel of its projective cover $P\rightarrow M$. This gives rise to the \emph{syzygy functor} $\Omega\colon A\mbox{-\underline{mod}}\rightarrow A\mbox{-\underline{mod}}$ (\cite[p.124]{ARS}). Set $\Omega^0(M)=M$ and $\Omega^{i+1}(M)=\Omega^i(\Omega(M))$ for $i\geq 0$. Denote by $\Omega^i(A\mbox{-mod})$ the full
subcategory of $A\mbox{-mod}$ formed by modules of the form $P\oplus\Omega^i(M)$ for some module $M$ and projective module $P$. Then an $A$-module $X$ belongs to $\Omega^i(A\mbox{-mod})$ if and only if there is an exact sequence $0\rightarrow X\rightarrow P^{1-i}\rightarrow \cdots \rightarrow P^{-1} \rightarrow  P^0$ with each $P^j$ projective.

Recall that $\mathbf{D}^b(A\mbox{-mod})$ denotes the bounded derived category of $A\mbox{-mod}$, whose shift functor is denoted by $[1]$. For $n\in \mathbb{Z}$, $[n]$ denotes the $n$-th power of $[1]$.   The module category $A\mbox{-mod}$ is viewed as a full subcategory of  $\mathbf{D}^b(A\mbox{-mod})$ by identifying an $A$-module with the corresponding stalk complex concentrated at degree zero (\cite[Proposition I.4.3]{Har66}). Recall that a complex in $\mathbf{D}^b(A\mbox{-mod})$  is \emph{perfect} provided that it is isomorphic to a bounded complex consisting of projective modules; these complexes form a full triangulated subcategory ${\rm perf}(A)$. Recall that, via an obvious functor,  ${\rm perf}(A)$ is triangle equivalent to the bounded homotopy category $\mathbf{K}^b(A\mbox{-proj})$; compare \cite[1.1-1.2]{Buc}.

Following \cite{Or04}, we call the quotient triangulated category $$\mathbf{D}_{\rm sg}(A)=\mathbf{D}^b(A\mbox{-mod})/{{\rm perf}(A)}$$ the \emph{singularity category} of $A$. Denote by $q\colon \mathbf{D}^b(A\mbox{-mod})\rightarrow \mathbf{D}_{\rm sg}(A)$ the quotient functor.

The following two results are known; compare \cite[Lemma 1.11]{Or04} and \cite[Lemma 2.2.2]{Buc}.

\begin{lem}\label{lem:stalk}
Let $X^\bullet$ be a complex in $\mathbf{D}_{\rm sg}(A)$ and $n_0>0$. Then for any $n$ large enough, there exists
a module $M$ in $\Omega^{n_0}(A\mbox{-{\rm mod}})$ such that $X^\bullet\simeq q(M)[n]$.
\end{lem}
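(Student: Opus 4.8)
The plan is to produce, for a given object $X^\bullet$ of $\mathbf{D}_{\rm sg}(A)$, an honest module $M$ lying in $\Omega^{n_0}(A\mbox{-mod})$ together with a large integer $n$ so that $X^\bullet\simeq q(M)[n]$. First I would reduce to the case where $X^\bullet$ is represented by a genuine bounded complex $(X^j, d^j)$ of $A$-modules with $X^j=0$ for $j$ outside some interval $[a,b]$; this is legitimate since $A\mbox{-mod}$ has enough projectives and $\mathbf{D}^b(A\mbox{-mod})$ is obtained from $\mathbf{K}^b(A\mbox{-mod})$ by inverting quasi-isomorphisms, so every object is isomorphic to such a complex.

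The main mechanism is \emph{stupid truncation combined with syzygies}. Given the bounded complex $X^\bullet$ in degrees $[a,b]$, consider the brutal (stupid) truncation $\sigma_{\geq a+1}X^\bullet$; there is a triangle in $\mathbf{D}^b(A\mbox{-mod})$
$$X^a[-a]\longrightarrow X^\bullet \longrightarrow \sigma_{\geq a+1}X^\bullet\longrightarrow X^a[-a+1].$$
Now $X^a[-a]$ is a stalk complex of a module concentrated in degree $a$; applying $q$ and using that $q$ sends $\mathbf{K}^b(A\mbox{-proj})$ to zero, I would not yet know $q(X^a[-a])=0$ unless $X^a$ is projective, so instead I replace this step by first taking a projective resolution. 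Concretely, I would argue: in $\mathbf{D}_{\rm sg}(A)$ every stalk complex $q(N)$ satisfies $q(N)\simeq q(\Omega N)[1]$, because the short exact sequence $0\to \Omega N\to P\to N\to 0$ with $P$ projective gives a triangle in $\mathbf{D}^b(A\mbox{-mod})$ whose middle term $P$ becomes zero in $\mathbf{D}_{\rm sg}(A)$, hence a triangle $q(\Omega N)\to 0\to q(N)\to q(\Omega N)[1]$, i.e.\ an isomorphism $q(N)\cong q(\Omega N)[1]$. Iterating, $q(N)\cong q(\Omega^k N)[k]$ for every $k\geq 0$, and since $\Omega^k N\in \Omega^k(A\mbox{-mod})\subseteq \Omega^{n_0}(A\mbox{-mod})$ as soon as $k\geq n_0$, the lemma holds for stalk complexes. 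The remaining work is to push a general bounded complex into a single stalk.

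For the general case I would induct on the length $b-a$ of the complex. If the length is zero, $X^\bullet$ is a shifted stalk complex and we are done by the previous paragraph together with the identity $q(M)[n]\cong q(\Omega^k M)[n-k]$; since we may take $n$ as large as we like, this flexibility lets us always land in $\Omega^{n_0}$. For the inductive step, use the truncation triangle above: $q(\sigma_{\geq a+1}X^\bullet)$ sits in a triangle with $q(X^a)[-a]$ and $q(X^\bullet)$. By induction $q(\sigma_{\geq a+1}X^\bullet)\simeq q(M')[n']$ for a suitable module $M'\in \Omega^{n_0+1}(A\mbox{-mod})$ and suitable large $n'$; also $q(X^a[-a])\simeq q(M'')[n'']$ similarly. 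Rotating the triangle, $q(X^\bullet)$ is the cone of a morphism between two shifted stalk complexes $q(M'')[n'']\to q(M')[n']$; aligning the shifts (again using the freedom to apply $\Omega$) we may assume $n''=n'+1$, so $q(X^\bullet)[-n'-1]$ is the cone of a morphism $q(M'')\to q(M')[-1]$, equivalently of a morphism $q(M'')[1]\to q(M')$, equivalently of a morphism $q(\Omega M'')\to q(M')$ — and in $A\mbox{-mod}$ such a morphism in the singularity category can be realized (after adding projectives, i.e.\ passing to a high enough syzygy) by an honest module map, whose mapping cone in $\mathbf{D}^b(A\mbox{-mod})$ is again a two-term complex of modules, hence reduces to a module after one more syzygy shift.

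I expect the main obstacle to be exactly this last realization step: showing that a morphism in $\mathbf{D}_{\rm sg}(A)$ between (high syzygies of) modules can be represented by an actual module homomorphism, and that the resulting mapping cone — a priori a two-term complex $[\,M_1\to M_0\,]$ — is quasi-isomorphic in $\mathbf{D}^b(A\mbox{-mod})$ to a single module after applying $\Omega$ sufficiently often. The cleanest route is to use that a morphism $\Omega^i M\to \Omega^i N$ in $A\mbox{-\underline{mod}}$ lifts to a chain map of truncated projective resolutions, take its mapping cone, and observe that this cone is a bounded complex of modules one of whose terms can be split off after replacing everything by a further syzygy; a bookkeeping argument then shows that for $n$ large the whole object becomes $q(M)[n]$ with $M\in\Omega^{n_0}(A\mbox{-mod})$. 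The induction on complex length plus the "shift freedom" provided by $q(M)\cong q(\Omega M)[1]$ are the two structural ingredients that make all the shifts match up.
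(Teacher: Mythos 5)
Your approach is genuinely different from the paper's, and while the overall strategy can be made rigorous, the proposal as written leaves the crucial step as an acknowledged sketch, and there are a few sign/shift slips along the way.

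The paper's proof is direct and avoids any induction or any appeal to Keller--Vossieck. One simply takes a quasi-isomorphism $P^\bullet \to X^\bullet$ with $P^\bullet$ a bounded-above complex of projectives, chooses $n \geq n_0$ large enough that $H^i(P^\bullet)=0$ for all $i \leq n_0 - n$, and forms the canonical truncation $\sigma^{\geq -n}P^\bullet = (\cdots \to 0 \to M \to P^{1-n} \to P^{2-n} \to \cdots)$ with $M$ the cokernel of $d^{-n-1}$ sitting in degree $-n$. This truncation is quasi-isomorphic to $P^\bullet$, and the evident chain map $\sigma^{\geq -n}P^\bullet \to M[n]$ has perfect cone (all the $P^j$ with $j>-n$ together form a bounded complex of projectives), so it becomes an isomorphism in $\mathbf{D}_{\rm sg}(A)$. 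The vanishing of $H^i$ for $i \leq n_0 - n$ guarantees that the segment $0 \to M \to P^{1-n} \to \cdots \to P^{n_0-n}$ is exact, so $M \in \Omega^{n_0}(A\mbox{-mod})$. That is the whole argument; no induction on complex length, no representation of morphisms in $\mathbf{D}_{\rm sg}(A)$ by module maps.

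Your route -- brutal truncation, induction on length, realize the connecting morphism of stalk complexes by an actual module map, then show the cone of a module map is again a shifted stalk -- can in principle be made to close, but you have correctly flagged where the work is, and that is exactly where the proposal is not a proof yet. Concretely, the missing lemma is: \emph{if $f\colon N_1 \to N_2$ is a homomorphism of $A$-modules, then $q(\mathrm{cone}(f)) \cong q(K)[1]$ for some $A$-module $K$.} One clean way to prove it: pick a projective surjection $\pi\colon P \twoheadrightarrow N_2$ and replace $f$ by the surjection $(f,\pi)\colon N_1\oplus P \to N_2$. The two-term complexes $[\,N_1 \to N_2\,]$ and $[\,N_1\oplus P \to N_2\,]$ differ by the perfect complex $P[1]$, so become isomorphic under $q$; and the latter, being a two-term complex with surjective differential, is quasi-isomorphic to the stalk $\ker(f,\pi)[1]$. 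This is the ingredient your last paragraph gestures at but does not establish. Without it, the induction does not close. Also note the small slip: from $q(N)\simeq q(\Omega N)[1]$ one gets $q(\Omega N) \simeq q(N)[-1]$, so $q(M'')[1]$ is \emph{not} $q(\Omega M'')$; to desuspend the target you should instead replace $q(M')$ by $q(\Omega M')[1]$ and cancel the shift on that side.

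Finally, your proof leans on the Keller--Vossieck description of $\mathrm{Hom}_{\mathbf{D}_{\rm sg}(A)}(q(M), q(N))$ as a colimit of stable Homs. In this paper that result (Proposition~\ref{prop:singlimit}) is quoted from the literature rather than derived from Lemma~\ref{lem:stalk}, so there is no circularity; still, it is considerably heavier machinery than the elementary truncation argument the paper uses, and the paper's argument also produces the module $M$ together with the explicit exact sequence witnessing $M\in\Omega^{n_0}(A\mbox{-mod})$ in one stroke.
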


\begin{proof}
Take a quasi-isomorphism $P^\bullet \rightarrow X^\bullet$ with $P^\bullet$ a bounded above complex of projective modules (\cite[Lemma I.4.6]{Har66}).
Take $n\geq n_0$ such that $H^i(X^\bullet)=0$ for all $i\leq n_0-n$. Consider the good truncation
$\sigma^{\geq -n}P^\bullet=\cdots \rightarrow  0\rightarrow M\rightarrow P^{1-n}\rightarrow P^{2-n}\rightarrow \cdots $ of $P^\bullet$, which is quasi-isomorphic to $P^\bullet$. Then the cone of the obvious chain map $\sigma^{\geq -n}P^\bullet\rightarrow M[n]$ is perfect, which becomes an isomorphism in $\mathbf{D}_{\rm sg}(A)$.  This shows that $X^\bullet\simeq q(M)[n]$.
We observe that $M$ lies in $\Omega^{n_0}(A\mbox{-{\rm mod}})$.
\end{proof}

\begin{lem}\label{lem:exactsequence}
Let $0\rightarrow M\rightarrow P^{1-n}\rightarrow \cdots \rightarrow P^0\rightarrow N\rightarrow 0$ be an exact sequence
with each $P^i$ projective. Then we have an isomorphism $q(N)\simeq q(M)[n]$ in $\mathbf{D}_{\rm sg}(A)$. In particular, for an
$A$-module $M$, we have a natural isomorphism $q(\Omega^n(M))\simeq q(M)[-n]$.
\end{lem}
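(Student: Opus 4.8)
The plan is to reduce the general exact sequence to a sequence of length-one extensions and apply the triangulated structure of $\mathbf{D}^b(A\mbox{-mod})$ together with the fact that perfect complexes vanish in $\mathbf{D}_{\rm sg}(A)$. First I would handle the base case $n=1$: given a short exact sequence $0\rightarrow M\rightarrow P^0\rightarrow N\rightarrow 0$ with $P^0$ projective, this yields a distinguished triangle $M\rightarrow P^0\rightarrow N\rightarrow M[1]$ in $\mathbf{D}^b(A\mbox{-mod})$ (via \cite[Proposition I.4.4]{Har66} or the standard identification of extensions with triangles). Applying the quotient functor $q$ and using that $q(P^0)=0$ since $P^0$ is perfect, the triangle $q(M)\rightarrow 0\rightarrow q(N)\rightarrow q(M)[1]$ forces $q(N)\simeq q(M)[1]$, with the isomorphism natural in the connecting map.

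For the general case, I would break the exact sequence $0\rightarrow M\rightarrow P^{1-n}\rightarrow \cdots \rightarrow P^0\rightarrow N\rightarrow 0$ into short exact sequences in the usual way: setting $Z^0 = N$ and $Z^{j} = \operatorname{im}(P^{-j}\rightarrow P^{1-j})$ for $1\le j\le n-1$ (so $Z^{n-1} = \ker(P^{1-n}\to P^{2-n})$ together with the inclusion of $M$), one gets short exact sequences $0\rightarrow Z^{j+1}\rightarrow P^{-j}\rightarrow Z^{j}\rightarrow 0$ for $0\le j\le n-1$, with $Z^{n} = M$. Applying the base case to each gives $q(Z^{j})\simeq q(Z^{j+1})[1]$, and composing these $n$ isomorphisms yields $q(N)=q(Z^0)\simeq q(Z^n)[n]=q(M)[n]$.

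For the final assertion, I would specialize to the exact sequence $0\rightarrow \Omega^n(M)\rightarrow P^{1-n}\rightarrow \cdots\rightarrow P^0\rightarrow M\rightarrow 0$ obtained by splicing together the first $n$ terms of a minimal projective resolution of $M$ (or more precisely, concatenating the short exact sequences $0\rightarrow \Omega^{i+1}(M)\rightarrow P_i\rightarrow \Omega^i(M)\rightarrow 0$ coming from projective covers). The previous part gives $q(M)\simeq q(\Omega^n(M))[n]$, hence $q(\Omega^n(M))\simeq q(M)[-n]$; naturality in $M$ follows from naturality of the connecting triangles in each short exact sequence and functoriality of $\Omega$ on $A\mbox{-\underline{mod}}$, noting that morphisms factoring through projectives are killed by $q$.

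The main obstacle — though a mild one — is bookkeeping the naturality: one must check that the isomorphism $q(N)\simeq q(M)[n]$ is natural with respect to morphisms of exact sequences, and in the syzygy case, that it descends from $A\mbox{-\underline{mod}}$ to be well-defined, i.e.\ independent of the choice of projective resolution. This rests on the standard fact that two projective resolutions are chain homotopy equivalent and that $q$ sends null-homotopic maps (which factor through projectives at the level of syzygies) to zero. Everything else is a routine induction using the long exact (triangle) sequences already available in $\mathbf{D}^b(A\mbox{-mod})$.
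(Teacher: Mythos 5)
Your proof is correct, but it takes a different route from the paper's. The paper argues in one step: the stalk complex $N$ is quasi-isomorphic to $\cdots\to 0\to M\to P^{1-n}\to\cdots\to P^0\to 0\to\cdots$, and the obvious chain map from this complex to $M[n]$ (identity on $M$, zero elsewhere) has cone isomorphic to the bounded complex $P^{1-n}\to\cdots\to P^0$, which is perfect; hence the map becomes an isomorphism in $\mathbf{D}_{\rm sg}(A)$. Your approach instead breaks the long exact sequence into $n$ short exact sequences and applies the base case (triangle plus $q(P)=0$) $n$ times. Both arguments are valid and yield the same isomorphism up to sign; the paper's version exhibits the isomorphism as a single explicit morphism in $\mathbf{D}^b(A\mbox{-mod})$ whose cone is visibly perfect, which makes the naturality statement more transparent and avoids tracking a composite of $n$ connecting isomorphisms, while your inductive version is more elementary in that it uses only the short-exact-sequence-to-triangle dictionary. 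One small slip in your bookkeeping: with $Z^0=N$ and $Z^j=\operatorname{im}(P^{-j}\to P^{1-j})$, the identification of $M$ occurs at $Z^n$, not $Z^{n-1}$ (for $n\ge 2$, $Z^{n-1}=\ker(P^{2-n}\to P^{3-n})$, whereas $M=\ker(P^{1-n}\to P^{2-n})=Z^n$); your short exact sequences $0\to Z^{j+1}\to P^{-j}\to Z^j\to 0$ for $0\le j\le n-1$ are correct, so this is merely a mislabeling in the parenthetical and doesn't affect the argument.
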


\begin{proof}
The stalk complex $N$ is quasi-isomorphic to $\cdots \rightarrow 0\rightarrow M\rightarrow P^{1-n}\rightarrow \cdots \rightarrow P^0\rightarrow 0\rightarrow \cdots$. This gives rise to a morphism $N\rightarrow M[n]$ in $\mathbf{D}^b(A\mbox{-mod})$, whose cone is perfect.
Then this morphism becomes an isomorphism in  $\mathbf{D}_{\rm sg}(A)$.
\end{proof}

Consider the composite $q'\colon A\mbox{-mod}\hookrightarrow \mathbf{D}^b(A\mbox{-mod})\stackrel{q}\rightarrow \mathbf{D}_{\rm sg}(A)$; it vanishes on projective modules. Then it induces uniquely a functor $A\mbox{-\underline{mod}}\rightarrow \mathbf{D}_{\rm sg}(A)$, which is still denoted by $q'$. Then Lemma \ref{lem:exactsequence} yields, for each $n\geq 0$,  the following
commutative diagram
\[\xymatrix{
A\mbox{-\underline{mod}}\ar[d]^{q'} \ar[r]^-{\Omega^n} & A\mbox{-\underline{mod}}\ar[d]^-{q'}\\
\mathbf{D}_{\rm sg}(A) \ar[r]^-{[-n]} & \mathbf{D}_{\rm sg}(A).
}\]
We refer to \cite[Lemma 2.2.2]{Buc} for a similar statement.

 The functor $q'$ induces a natural map $$\Phi^0\colon{\rm \underline{Hom}}_A(M, N)\rightarrow {\rm Hom}_{\mathbf{D}_{\rm sg}(A)}(q(M), q(N))$$ for any modules $M, N$.  Let $n\geq 1$. Lemma \ref{lem:exactsequence} yields a natural isomorphism $\theta_M\colon q(M)\stackrel{\sim}\longrightarrow q(\Omega^n(M))[n]$. Then we have a map $$\Phi^n\colon {\rm \underline{Hom}}_A(\Omega^n(M), \Omega^n(N))\rightarrow {\rm Hom}_{\mathbf{D}_{\rm sg}(A)}(q(M), q(N))$$
given by $\Phi^n(f)=(\theta^n_N)^{-1}\circ (\Phi^0(f)[n])\circ \theta^n_M$.

Consider the chain of maps
${\rm \underline{Hom}}_A(\Omega^n(M), \Omega^n(N))\rightarrow {\rm \underline{Hom}}_A(\Omega^{n+1}(M), \Omega^{n+1}(N))$ induced by the syzygy functor. It is routine to verify that $\Phi^n$ are compatible with this chain of maps. Then we have an induced map
$$\Phi\colon  \varinjlim {\rm \underline{Hom}}_A(\Omega^n(M), \Omega^n(N)) \longrightarrow {\rm Hom}_{\mathbf{D}_{\rm sg}(A)}(q(M), q(N)).$$

We recall the following basic result.

\begin{prop}{\rm (Keller-Vossieck)}\label{prop:singlimit} Let $M, N$ be $A$-modules as above. Then  the map $\Phi$ is an isomorphism.
\end{prop}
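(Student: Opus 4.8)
The plan is to prove that $\Phi$ is bijective by constructing an explicit inverse, using Lemma~\ref{lem:stalk} to reduce any morphism in the singularity category to one between modules lying in high syzygy classes. First I would recall that by Lemma~\ref{lem:stalk}, every object of $\mathbf{D}_{\rm sg}(A)$ is, up to isomorphism, of the form $q(M)[n]$ with $M$ a module; combined with the commutative diagram relating $q'$, $\Omega^n$ and $[-n]$, this lets me assume without loss of generality that morphisms $q(M)\to q(N)$ are being computed ``at level $n$''. The key structural input is the standard description of morphisms in a Verdier quotient: a morphism $q(M)\to q(N)$ is represented by a roof $M \xleftarrow{s} Z \xrightarrow{f} N$ in $\mathbf{D}^b(A\mbox{-mod})$ where the cone of $s$ is perfect, and two such roofs agree iff they can be dominated by a common refinement with perfect cones.

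The heart of the argument is the following: given such a roof, I would show that after applying a sufficiently high syzygy (equivalently, shifting by $[-n]$ for $n\gg 0$), the complex $Z$ can be replaced by a \emph{module} and the morphism $s$ by a genuine module homomorphism whose cokernel is projective (so that it becomes invertible in $\mathbf{D}_{\rm sg}(A)$), and likewise $f$ becomes an honest module map. Concretely, using the good-truncation argument from the proof of Lemma~\ref{lem:stalk} applied to $Z$, and resolving, one can arrange $Z\simeq q(L)[n]$ for some $L\in\Omega^{n_0}(A\mbox{-mod})$; then $s$ and $f$ correspond, after the identifications $\theta$, to morphisms in $A\mbox{-\underline{mod}}$ out of $L$ into $\Omega^n(M)$ and $\Omega^n(N)$ respectively, with the one into $\Omega^n(M)$ an isomorphism in the stable category (its cone being perfect forces its kernel and cokernel to be projective, hence it is a stable isomorphism once $n$ is large enough that $L$, $\Omega^n(M)$ have no projective summands, or after absorbing such summands). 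Composing $f$ with the inverse of this stable isomorphism produces an element of $\underline{\rm Hom}_A(\Omega^n(M),\Omega^n(N))$, and one checks this is independent of all choices and of $n$, giving a well-defined map into $\varinjlim \underline{\rm Hom}_A(\Omega^n(M),\Omega^n(N))$ inverse to $\Phi$. Surjectivity of $\Phi$ is then immediate, and injectivity follows because two stable maps $\Omega^n(M)\to\Omega^n(N)$ that become equal in $\mathbf{D}_{\rm sg}(A)$ can, by the refinement property of roofs and another truncation, be shown to be connected by a zig-zag realized inside $A\mbox{-\underline{mod}}$ after a further syzygy shift, hence already equal in the colimit.

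I expect the main obstacle to be the careful bookkeeping around \emph{projective summands and the passage to the stable category}: the Verdier-quotient roofs live in $\mathbf{D}^b(A\mbox{-mod})$, where ``cone is perfect'' is the right notion, and one must translate this cleanly into ``cokernel is projective'' at the level of modules, making sure that the ambiguity by projective summands is exactly what the stable Hom $\underline{\rm Hom}_A$ quotients out, and that the syzygy functor (which is only well-defined up to projective summands) interacts correctly with the isomorphisms $\theta_M$. A secondary technical point is verifying that the constructed inverse is compatible with the transition maps in the direct system, so that it genuinely factors through the colimit; this is the same kind of routine compatibility check already alluded to in the construction of $\Phi$, but it must be done on roofs rather than on honest morphisms. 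Once these compatibilities are in place, bijectivity of $\Phi$ follows formally.
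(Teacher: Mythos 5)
The paper does not actually prove this Proposition; it cites Keller--Vossieck \cite[Exemple 2.3]{KV} and Beligiannis \cite[Corollary 3.9(1)]{Bel2000}. Your blind proof attempt is therefore a genuinely self-contained argument, and its overall architecture---reduce to high syzygies via Lemma \ref{lem:stalk}, use the roof description of morphisms in the Verdier quotient $\mathbf{D}^b(A\mbox{-mod})/{\rm perf}(A)$, and truncate projective resolutions to replace roofs by stable module maps---is the right one, essentially the argument that underlies the cited results.

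However, there is a genuine gap at the crucial step. You assert that for the module map $s'\colon L\to \Omega^n(M)$ produced by truncation, ``its cone being perfect forces its kernel and cokernel to be projective, hence it is a stable isomorphism.'' This implication is false in general. For instance, over $A=k[x]/(x^2)$ the map $x\colon A\to A$ has cone quasi-isomorphic to $k[1]\oplus k$ --- actually, consider instead the two-term complex $A\xrightarrow{x}A$: it \emph{is} perfect (it is already a bounded complex of projectives), yet its $H^{-1}=\ker(x)\cong k$ and $H^0=\operatorname{coker}(x)\cong k$ have infinite projective dimension. So ``perfect cone'' does \emph{not} imply the kernel and cokernel are projective; the cohomology modules of a perfect complex need not have finite projective dimension at all. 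Your parenthetical hedge (``once $n$ is large enough that $L,\Omega^n(M)$ have no projective summands, or after absorbing such summands'') does not repair this, because the failure is not caused by stray projective summands of the source and target. Even the secondary implication ``kernel and cokernel projective $\Rightarrow$ stable isomorphism'' is not automatic: a short exact sequence $0\to P\to X\to Y\to 0$ with $P$ projective need not split.

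What is actually needed is the following more careful statement: if one lifts the roof map $s\colon Z\to M$ to a chain map $\hat s\colon P_Z\to P_M$ between projective resolutions, then $\operatorname{cone}(\hat s)$ is a bounded-above complex of projectives quasi-isomorphic to a perfect complex, hence homotopy equivalent to a bounded complex of projectives, and therefore \emph{split exact} in all sufficiently low degrees. It is this split exactness of the tail of $\operatorname{cone}(\hat s)$ (not projectivity of $\ker$ and $\operatorname{coker}$ of $s_n$) that one must exploit to show that the induced syzygy map $\Omega^n\hat s$ becomes invertible in $A\mbox{-\underline{mod}}$ for $n$ large; equivalently, one should invoke the stabilization formalism for left triangulated categories as in \cite{KV} and \cite{Bel2000}, where this bookkeeping is packaged once and for all. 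As written, the key claim in your argument is wrong and the proof is incomplete, although the overall strategy is salvageable.
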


\begin{proof}
The statement follows from \cite[Exemple 2.3]{KV}. We refer to \cite[Corollary 3.9(1)]{Bel2000} for a detailed proof.
\end{proof}

Recall that an additive category $\mathcal{A}$ is \emph{idempotent split} provided that
each idempotent $e\colon X\rightarrow X$  splits, that is, it admits a factorization $X\stackrel{u}\rightarrow Y\stackrel{v}\rightarrow X$
with $u\circ v={\rm Id}_Y$. For example, a Krull-Schmidt category is idempotent split (\cite[Appendix A]{CYZ}). In particular, for an artin
algebra $A$, the stable category $A\mbox{-\underline{mod}}$ is idempotent split.

\begin{cor}\label{cor:idempotent}
The singularity category $\mathbf{D}_{\rm sg}(A)$ of an artin algebra $A$ is idempotent split.
\end{cor}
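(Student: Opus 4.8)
The plan is to deduce idempotent splitting of $\mathbf{D}_{\rm sg}(A)$ from the Keller--Vossieck description (Proposition \ref{prop:singlimit}) together with the fact that each stable module category $A\mbox{-\underline{mod}}$ is idempotent split. The key structural observation is Lemma \ref{lem:stalk}: every object of $\mathbf{D}_{\rm sg}(A)$ is isomorphic to $q(M)[n]$ for some $A$-module $M$ and some $n\in\mathbb{Z}$. Since the shift functor $[n]$ is an auto-equivalence of $\mathbf{D}_{\rm sg}(A)$, and equivalences preserve splitting of idempotents, it suffices to show that every idempotent endomorphism of an object of the form $q(M)$ splits.

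So first I would fix an $A$-module $M$ and an idempotent $e\colon q(M)\to q(M)$ in $\mathbf{D}_{\rm sg}(A)$. By Proposition \ref{prop:singlimit}, the morphism space ${\rm Hom}_{\mathbf{D}_{\rm sg}(A)}(q(M),q(M))$ is the direct limit $\varinjlim \underline{\rm Hom}_A(\Omega^n(M),\Omega^n(N))$ with $N=M$; this is a direct limit of \emph{rings} (the transition maps $\Phi^n$ and the syzygy maps respect composition, as remarked in the text), and the identity element corresponds to ${\rm Id}$. Hence $e$ is represented, for some $n$, by an element of $\underline{\rm Hom}_A(\Omega^n(M),\Omega^n(M))$ which becomes idempotent after passing sufficiently far along the chain. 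Replacing $n$ by a larger index, I may assume $e$ is represented by a genuine idempotent $\bar f$ in the ring $\underline{\rm End}_A(\Omega^n(M))$ — here one uses that the transition maps are ring homomorphisms, so if $f^2-f$ maps to $0$ in the colimit, it already maps to $0$ at some finite stage, and then $f$ itself may be taken idempotent at that stage.

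Next, since $A\mbox{-\underline{mod}}$ is idempotent split (as recalled just before the corollary, being a Krull--Schmidt category), the idempotent $\bar f\colon \Omega^n(M)\to\Omega^n(M)$ splits in $A\mbox{-\underline{mod}}$: there is an object $L$ and stable maps $\bar u\colon \Omega^n(M)\to L$, $\bar v\colon L\to \Omega^n(M)$ with $\bar v\bar u=\bar f$ and $\bar u\bar v={\rm Id}_L$. Applying the functor $q'\colon A\mbox{-\underline{mod}}\to\mathbf{D}_{\rm sg}(A)$ (more precisely the composite with $[n]$, via the isomorphisms $\theta^n$) transports this splitting into $\mathbf{D}_{\rm sg}(A)$: we obtain $q(L)[n]$ together with morphisms realizing $e$ as $v\circ u$ with $u\circ v={\rm Id}$. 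Transporting back along $\theta^n_M$ gives a splitting of $e$ on $q(M)$ itself.

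The main obstacle is the bookkeeping around the direct limit: one must check carefully that an idempotent in a filtered colimit of rings lifts to an idempotent at a finite stage. This is where the ring-homomorphism property of the transition maps is essential, and it is the only genuinely nontrivial point — everything else is formal transport of structure along the equivalence $[n]$ and the functor $q'$. An alternative, perhaps cleaner, route would be to invoke a general principle: a direct limit of idempotent-split categories along full, dense (or at least idempotent-respecting) functors is idempotent split; but since $\mathbf{D}_{\rm sg}(A)$ is not literally such a colimit of the $A\mbox{-\underline{mod}}$, I prefer the hands-on argument above, which only uses Lemma \ref{lem:stalk} and Proposition \ref{prop:singlimit} as black boxes.
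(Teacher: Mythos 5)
Your argument is correct and follows essentially the same route as the paper's proof: reduce via Lemma \ref{lem:stalk} to idempotents on objects of the form $q(M)$, use Proposition \ref{prop:singlimit} to realize such an idempotent as (the image of) an idempotent $e^n$ on $\Omega^n(M)$ in $A\mbox{-\underline{mod}}$ for $n$ large, split $e^n$ there using the Krull--Schmidt property, and transport the splitting back along $\theta^n_M$. The only difference is one of exposition: the paper compresses the lifting step into the single assertion that ``for a large $n$ there is an idempotent $e^n$ mapped by $\Phi$ to $e$,'' whereas you spell out the standard filtered-colimit-of-rings argument (an element whose square-minus-itself vanishes in the colimit already does so at a finite stage), which is a worthwhile clarification.
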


\begin{proof}
 By Lemma \ref{lem:stalk} it suffices to show that for each module $M$, an idempotent $e\colon q(M)\rightarrow q(M)$ splits.
 The above proposition implies that for a large $n$, there is an idempotent $e^n\colon \Omega^n(M)\rightarrow \Omega^n(M)$ in $A\mbox{-\underline{mod}}$ which is mapped by $\Phi$ to $e$. The idempotent $e^n$ splits as $\Omega^n(M)\stackrel{u} \rightarrow Y\stackrel{v}\rightarrow \Omega^n(M)$ with $u\circ v={\rm Id}_Y$ in $A\mbox{-\underline{mod}}$. Then the idempotent $e$
 factors as $q(M)\stackrel{(q(u)[n])\circ \theta_M^n}\longrightarrow q(Y)[n]\stackrel{(\theta_M^n)^{-1}\circ (q(v)[n])}\longrightarrow q(M)$.
\end{proof}

Let $\mathcal{A}$ be an additive category. For a subcategory $\mathcal{C}$, denote by ${\rm add}\; \mathcal{C}$
the full subcategory of $\mathcal{A}$ formed by direct summands of finite direct sums of objects in $\mathcal{C}$.
For any algebra $\Gamma$, denote by $\mbox{proj}\; \Gamma$ the category of finitely generated right projective $\Gamma$-modules. We observe
that $\mbox{proj}\; \Gamma={\rm add}\; \Gamma_\Gamma$.

An artin algebra $A$ is called \emph{$\Omega^{\infty}$-finite} provided that there exists a module $E$
and $n\geq 0$ such that $\Omega^n(A\mbox{-mod})\subseteq {\rm add}\;(A\oplus E)$. In this case, we call
$E$ an \emph{$\Omega^\infty$-generator} of $A$.

\begin{prop}\label{prop:sing}
Let $A$ be an  $\Omega^{\infty}$-finite algebra with an $\Omega^\infty$-generator $E$. Then we have
$\mathbf{D}_{\rm sg}(A)={\rm add}\; q(E)$. Consequently, we have an equivalence of categories
$$\mathbf{D}_{\rm sg}(A) \simeq {\rm proj}\; {\rm End}_{\mathbf{D}_{\rm sg}(A)}(q(E)),$$
which sends $q(E)$ to ${\rm End}_{\mathbf{D}_{\rm sg}(A)}(q(E))$.
\end{prop}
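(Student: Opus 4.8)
The plan is to prove the two assertions in turn: first that $\mathbf{D}_{\rm sg}(A) = {\rm add}\; q(E)$ as a full subcategory, and then that the evaluation/Yoneda-type functor ${\rm Hom}_{\mathbf{D}_{\rm sg}(A)}(q(E), -)$ restricted to ${\rm add}\; q(E)$ yields the claimed equivalence with ${\rm proj}\; {\rm End}_{\mathbf{D}_{\rm sg}(A)}(q(E))$.

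For the first assertion, let $X^\bullet$ be an arbitrary object of $\mathbf{D}_{\rm sg}(A)$. By Lemma \ref{lem:stalk}, applied with $n_0 = n$ (the integer witnessing $\Omega^\infty$-finiteness), for $m$ large enough there is a module $M \in \Omega^n(A\mbox{-mod})$ with $X^\bullet \simeq q(M)[m]$. By hypothesis $\Omega^n(A\mbox{-mod}) \subseteq {\rm add}\;(A \oplus E)$, so $M$ is a direct summand of a finite direct sum of copies of $A$ and $E$. Since $q$ is additive and kills projectives (in particular $q(A) = 0$), we get $q(M) \in {\rm add}\; q(E)$. It remains to remove the shift: because $K^1$-type invertibility is not yet available here, I instead argue that $\Sigma = [1]$ already preserves ${\rm add}\; q(E)$. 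Indeed, by Lemma \ref{lem:exactsequence} one has $q(\Omega^n(Y)) \simeq q(Y)[-n]$ naturally, so $q(E)[-n] \simeq q(\Omega^n(E))$, and $\Omega^n(E) \in \Omega^n(A\mbox{-mod}) \subseteq {\rm add}\;(A\oplus E)$, hence $q(E)[-n] \in {\rm add}\; q(E)$; iterating and using that $[1]$ is an auto-equivalence, one sees ${\rm add}\; q(E)$ is stable under all powers $[m]$ of the shift. Therefore $X^\bullet \simeq q(M)[m] \in {\rm add}\; q(E)$, proving $\mathbf{D}_{\rm sg}(A) = {\rm add}\; q(E)$.

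For the second assertion, write $\Lambda = {\rm End}_{\mathbf{D}_{\rm sg}(A)}(q(E))$ and consider the functor $F = {\rm Hom}_{\mathbf{D}_{\rm sg}(A)}(q(E), -)\colon \mathbf{D}_{\rm sg}(A) \to {\rm Mod}\;\Lambda$ (right $\Lambda$-modules). It sends $q(E)$ to $\Lambda_\Lambda$, hence sends ${\rm add}\; q(E)$ into ${\rm add}\; \Lambda_\Lambda = {\rm proj}\;\Lambda$. On ${\rm add}\; q(E)$ the functor $F$ is fully faithful by the standard Yoneda argument: this is immediate on $q(E)$ itself, extends to finite direct sums and then to direct summands by additivity, using that both categories are idempotent split — for $\mathbf{D}_{\rm sg}(A)$ this is Corollary \ref{cor:idempotent}, and for ${\rm proj}\;\Lambda$ it is automatic. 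Essential surjectivity onto ${\rm proj}\;\Lambda$ follows because every object of ${\rm proj}\;\Lambda$ is a summand of a finite sum of copies of $\Lambda_\Lambda = F(q(E))$, and $F$ being full plus ${\rm add}\; q(E)$ idempotent split lets us lift the corresponding idempotent and split it. Combining, $F$ restricts to an equivalence $\mathbf{D}_{\rm sg}(A) = {\rm add}\; q(E) \xrightarrow{\sim} {\rm proj}\;\Lambda$ sending $q(E)$ to $\Lambda$.

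The main obstacle, and the point needing the most care, is the handling of the shift in the first part: Lemma \ref{lem:stalk} only produces $q(M)[m]$ with $m$ possibly large and positive, so one genuinely needs that ${\rm add}\; q(E)$ is closed under all integer powers of $[1]$, which is where the $\Omega^\infty$-finiteness is used a second time (via Lemma \ref{lem:exactsequence}) rather than just once. The rest is the routine Yoneda/idempotent-splitting package, for which Corollary \ref{cor:idempotent} is exactly the input that makes the summand-lifting step legitimate.
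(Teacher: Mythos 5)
There is a genuine gap in your handling of the shift in the first part. Lemma~\ref{lem:exactsequence} gives $q(E)[-n]\simeq q(\Omega^n(E))\in{\rm add}\;q(E)$, so you correctly obtain $({\rm add}\;q(E))[-n]\subseteq{\rm add}\;q(E)$, equivalently ${\rm add}\;q(E)\subseteq({\rm add}\;q(E))[n]$. But Lemma~\ref{lem:stalk} produces $X^\bullet\simeq q(M)[m]$ with $m>0$, so what you actually need is the \emph{opposite} inclusion $({\rm add}\;q(E))[m]\subseteq{\rm add}\;q(E)$, i.e.\ closure under \emph{positive} shifts. Your sentence ``iterating and using that $[1]$ is an auto-equivalence, one sees ${\rm add}\;q(E)$ is stable under all powers $[m]$'' is unjustified: iterating your inclusion only gives stability under $[-kn]$ for $k\geq 0$, and the fact that $[-n]$ is an auto-equivalence of the ambient category does not force the restricted fully faithful functor $[-n]\colon{\rm add}\;q(E)\to{\rm add}\;q(E)$ to be essentially surjective. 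In short, you have ``going down'' but need ``going up.''

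The missing ingredient is the paper's stabilization step: since $\Omega^{j+1}(A\mbox{-mod})\subseteq\Omega^j(A\mbox{-mod})$ and all these lie inside the Krull--Schmidt category ${\rm add}(A\oplus E)$ (which has finitely many indecomposables), the decreasing chain of additive closures must stabilize, so one may pick $n_0$ with ${\rm add}\;\Omega^{n_0}(A\mbox{-mod})={\rm add}\;\Omega^{n_0+1}(A\mbox{-mod})=\cdots$. This \emph{equality} (not just inclusion) is what lets one ``desyzygify'': given $M\in\Omega^{n_0}(A\mbox{-mod})$ and $n_1>0$, one finds $N$ with $M\oplus N\in\Omega^{n_0+n_1}(A\mbox{-mod})$, hence an exact sequence $0\to M\oplus N\to P^{1-n_1}\to\cdots\to P^0\to L\to 0$ with $L\in\Omega^{n_0}(A\mbox{-mod})\subseteq{\rm add}(A\oplus E)$, and then Lemma~\ref{lem:exactsequence} yields $q(L)\simeq q(M\oplus N)[n_1]$, so $X^\bullet=q(M)[n_1]$ is a summand of $q(L)\in{\rm add}\;q(E)$. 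Without this stabilization, you cannot control the positive shift. Your second part (the Yoneda/projectivization argument, using Corollary~\ref{cor:idempotent} to split the lifted idempotent) is correct and matches the paper.
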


\begin{proof}
Observe that $\Omega^{n+1}(A\mbox{-mod})\subseteq \Omega^n(A\mbox{-mod})$. Then we may assume that
${\rm add}\; (A\oplus E)\supseteq {\rm add}\; \Omega^{n_0}(A\mbox{-mod})={\rm add}\; \Omega^{n_0+1}(A\mbox{-mod})=\cdots $
for  $n_0$ large enough.

For the first statement, it suffices to show that each object $X^\bullet$ in $\mathbf{D}_{\rm sg}(A)$ belongs to
${\rm add}\; q(E)$. By Lemma \ref{lem:stalk}, $X^\bullet\simeq q(M)[n_1]$ for a module $M\in \Omega^{n_0}(A\mbox{-mod})$ and
$n_1>0$.  Since ${\rm add}\; \Omega^{n_0}(A\mbox{-mod})={\rm add}\; \Omega^{n_0+n_1}(A\mbox{-mod})$, we may assume that
$M\oplus N\in \Omega^{n_0+n_1}(A\mbox{-mod})$ for some module $N$. Take an exact sequence $0\rightarrow M\oplus N\rightarrow P^{1-n_1}\rightarrow
\cdots \rightarrow P^0\rightarrow L\rightarrow 0$ with each $P^i$ projective and $L\in \Omega^{n_0}(A\mbox{-mod})$. By Lemma
\ref{lem:exactsequence}, $q(L)\simeq q(M\oplus N)[n_1]$ and then $X^\bullet$ is a direct summand of $q(L)$. Observing that
$L\in {\rm add}\; (A\oplus E)$, we are done with the first statement.

The second statement follows from the projectivization; see \cite[Proposition II.2.1]{ARS}. The functor is given by
${\rm Hom}_{\mathbf{D}_{\rm sg}(A)}(q(E), -)$. We point out that Corollary \ref{cor:idempotent} is needed here.
\end{proof}

\section{Algebras with radical square zero}

In this section, we study the singularity category of an algebra with radical square zero, and prove
Theorem A and B. An explicit example is given at the end.

Let $A$ be an artin algebra. Denote by $\mathbf{r}$ the Jacobson radical of $A$.
The algebra $A$ is said to be with \emph{radical square zero} provided that
$\mathbf{r}^2=0$. In this case, $\mathbf{r}$ has an
${A/{\mathbf{r}}}\mbox{-}{A/\mathbf{r}}\mbox{-}$bimodule structure, which is induced from
the multiplication of $A$.

Denote by $A\mbox{-\underline{ssmod}}$ the full subcategory of $A\mbox{-\underline{mod}}$ formed
by semisimple modules. We observe that $\mathbf{r}\otimes_{A/\mathbf{r}}S=0$ for a simple projective
module $S$. Then the functor $\mathbf{r}\otimes_{A/\mathbf{r}}-\colon A\mbox{-\underline{ssmod}}\rightarrow A\mbox{-\underline{ssmod}}$ is well defined. We observe that the syzygy functor $\Omega$ sends semisimple modules to
semisimple modules, and then we have the restricted functor $\Omega\colon A\mbox{-\underline{ssmod}}\rightarrow A\mbox{-\underline{ssmod}}$.

The following result is implicitly contained in the proof of \cite[Lemma X.2.1]{ARS}.

\begin{lem}\label{lem:omega}
There is a natural isomorphism $\Omega\simeq \mathbf{r}\otimes_{A/\mathbf{r}}-$ of
functors on $A\mbox{-\underline{\rm ssmod}}$.
\end{lem}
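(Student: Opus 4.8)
The plan is to construct the isomorphism explicitly on each semisimple module and then check naturality. Let $S$ be a semisimple $A$-module, which we may write as a direct sum of copies of the simple modules $S_1, \dots, S_n$. Since $A$ has radical square zero, the projective cover $P\rightarrow S$ has the property that $\mathbf{r}P$ is the radical of $P$, and $\mathbf{r}P$ is semisimple because $\mathbf{r}^2=0$. Thus the syzygy $\Omega(S)=\ker(P\rightarrow S)$ equals $\mathbf{r}P$, a semisimple module. So the first step is to identify $\mathbf{r}P$ with $\mathbf{r}\otimes_{A/\mathbf{r}}S$ as $A$-modules (equivalently, as $A/\mathbf{r}$-modules, since both are annihilated by $\mathbf{r}$).

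For that identification, I would use the standard fact that if $S = (A/\mathbf{r})\otimes_{A/\mathbf{r}} S$ has projective cover $P$, then $P \cong A\otimes_{A/\mathbf{r}} S$ in the radical-square-zero setting — more precisely, one has a short exact sequence $0\rightarrow \mathbf{r}\otimes_{A/\mathbf{r}}S\rightarrow A\otimes_{A/\mathbf{r}}S\rightarrow S\rightarrow 0$ coming from tensoring $0\rightarrow \mathbf{r}\rightarrow A\rightarrow A/\mathbf{r}\rightarrow 0$ with $S$ over $A/\mathbf{r}$ (this sequence stays exact on the left because $S$, being a module over the semisimple ring $A/\mathbf{r}$, is flat, or simply because the sequence splits as a sequence of $A/\mathbf{r}$-modules). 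Checking that $A\otimes_{A/\mathbf{r}}S$ is indeed the projective cover of $S$ — i.e. that it is projective as an $A$-module and that the surjection is essential — is where the radical-square-zero hypothesis does its work, and this is essentially the content invoked in the proof of \cite[Lemma X.2.1]{ARS}. This gives, for each semisimple $S$, a canonical isomorphism $\Omega(S)\xrightarrow{\ \sim\ }\mathbf{r}\otimes_{A/\mathbf{r}}S$ in $A\mbox{-mod}$, hence in $A\mbox{-\underline{mod}}$.

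Next I would address naturality. A morphism $f\colon S\rightarrow S'$ of semisimple modules lifts (non-uniquely) to a morphism $\tilde f\colon A\otimes_{A/\mathbf{r}}S\rightarrow A\otimes_{A/\mathbf{r}}S'$ of projective covers, which restricts to $\mathbf{r}\otimes_{A/\mathbf{r}}S\rightarrow \mathbf{r}\otimes_{A/\mathbf{r}}S'$, and one must check that, modulo maps factoring through projectives, this restriction agrees with $\mathbf{r}\otimes_{A/\mathbf{r}}f$ under the identification above and is independent of the lift. Independence of the lift is exactly the well-definedness of the syzygy functor on the stable category; that two lifts differ by a map through a projective, and hence their restrictions differ by a map through a projective semisimple module, is the key point — and here one uses that a map between syzygies factoring through a projective is zero in $A\mbox{-\underline{ssmod}}$ only after noting the projectives involved are themselves semisimple (which holds since $\mathbf{r}P$ semisimple for $P$ projective forces... actually one just observes $\mathbf{p}$ is computed the same way). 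Comparing with $\mathbf{r}\otimes f$ is then a direct diagram chase using the functoriality of the exact sequence $0\rightarrow \mathbf{r}\otimes_{A/\mathbf{r}}S\rightarrow A\otimes_{A/\mathbf{r}}S\rightarrow S\rightarrow 0$ in $S$.

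The main obstacle I anticipate is not any single hard computation but rather the bookkeeping around the stable category: one must be careful that $\Omega$ as defined via projective covers (\cite[p.124]{ARS}) really is represented, on semisimple modules, by the concrete construction $S\mapsto \mathbf{r}\otimes_{A/\mathbf{r}}S$, including checking that this assignment is functorial on the nose before passing to stable categories, or else that the ambiguity is absorbed into $\mathbf{p}(-,-)$. Concretely, the subtle step is verifying that $A\otimes_{A/\mathbf{r}}S\rightarrow S$ is a projective cover (projectivity of $A\otimes_{A/\mathbf{r}}S$ and essentiality of the epimorphism), since everything else follows formally once that is in hand. I would therefore organize the proof as: (i) establish the exact sequence and that $A\otimes_{A/\mathbf{r}}S$ is the projective cover of $S$; (ii) deduce $\Omega(S)\cong \mathbf{r}\otimes_{A/\mathbf{r}}S$; (iii) check naturality by lifting morphisms and comparing, noting the ambiguity lies in $\mathbf{p}$.
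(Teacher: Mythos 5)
There is a genuine gap at the central computational step. You propose to tensor the sequence $0\rightarrow\mathbf{r}\rightarrow A\rightarrow A/\mathbf{r}\rightarrow 0$ with $S$ over $A/\mathbf{r}$ and identify the middle term $A\otimes_{A/\mathbf{r}}S$ with the projective cover of $S$. But $A$ carries no right $A/\mathbf{r}$-module structure: for $a\in A$ and $\bar b\in A/\mathbf{r}$ the element $a\tilde b$ depends on the chosen lift $\tilde b$ of $\bar b$ unless $a\mathbf{r}=0$, and $\mathbf{r}^2=0$ only makes $\mathbf{r}$ and $A/\mathbf{r}$ (not $A$ itself) into $A/\mathbf{r}$-bimodules. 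So $A\otimes_{A/\mathbf{r}}S$ is not a well-defined $A$-module, the short exact sequence you write down is not available, and the sequence $0\rightarrow\mathbf{r}\rightarrow A\rightarrow A/\mathbf{r}\rightarrow 0$ is not a sequence of right $A/\mathbf{r}$-modules to begin with. The appeal to flatness of $S$ over the semisimple ring $A/\mathbf{r}$ therefore addresses a non-issue; the obstruction occurs one step earlier, in forming the tensor product at all. (Even in the presence of a Wedderburn--Malcev splitting $A/\mathbf{r}\hookrightarrow A$, which need not exist over a general commutative artinian base, the construction would depend on the chosen splitting and the functoriality you need would not come for free.)

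The paper avoids this by tensoring on the other side: take a projective cover $P\rightarrow X$ and apply $-\otimes_A P$ to the $A$-$A$-bimodule sequence $0\rightarrow\mathbf{r}\rightarrow A\rightarrow A/\mathbf{r}\rightarrow 0$. Since $P$ is projective, hence flat, over $A$, the sequence $0\rightarrow\mathbf{r}\otimes_A P\rightarrow P\rightarrow P/\mathbf{r}P\rightarrow 0$ is exact, and $P/\mathbf{r}P\simeq X$ because $P\rightarrow X$ is a projective cover; thus $\Omega(X)\simeq\mathbf{r}\otimes_A P$. Now $\mathbf{r}^2=0$ implies that the right $A$-action on $\mathbf{r}$ factors through $A/\mathbf{r}$, so $\mathbf{r}\otimes_A P\simeq\mathbf{r}\otimes_{A/\mathbf{r}}(P/\mathbf{r}P)\simeq\mathbf{r}\otimes_{A/\mathbf{r}}X$, which is the asserted isomorphism. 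If you replace "$A\otimes_{A/\mathbf{r}}S$" by the honest projective cover $P$ and tensor over $A$ rather than over $A/\mathbf{r}$, the rest of your outline --- identifying the syzygy with $\mathbf{r}\otimes_{A/\mathbf{r}}X$ and then checking naturality by lifting morphisms and absorbing the ambiguity into $\mathbf{p}(-,-)$ --- matches the paper's argument.
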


\begin{proof}
Let $X$ be a semisimple module with a projective cover $P\rightarrow X$. Tensoring $P$
with the natural exact sequence of $A$-$A$-bimodules $0\rightarrow \mathbf{r}\rightarrow A\rightarrow {A/\mathbf{r}}\rightarrow 0$ yields
$\Omega(X)\simeq \mathbf{r} \otimes_A P$. Using isomorphisms $\mathbf{r}\otimes_A P\simeq
\mathbf{r}\otimes_{A/\mathbf{r}} {P/\mathbf{r}P}$ and ${P/\mathbf{r}P}\simeq X$, we get an isomorphism
$\Omega(X)\simeq \mathbf{r}\otimes_{A/\mathbf{r}}X$. It is routine to verify that this isomorphism
is natural in $X$.
\end{proof}

Recall that an algebra $\Gamma$ is \emph{regular} in the sense of von Neumann provided that for each
element $a$ there exists $a'$ such that $aa'a=a$. For example, a semisimple algebra is regular.
Then a direct limit of semisimple algebras is regular.  For details, we refer to \cite[Theorem and Definition 11.24]{Fai73}.

Recall that for an artin algebra $A$ with radical square zero, there is a chain of algebra homomorphisms
${\rm End}_{A/{\mathbf{r}}}(\mathbf{r}^{\otimes i})\rightarrow {\rm End}_{A/{\mathbf{r}}}(\mathbf{r}^{\otimes i+1})$ induced by $\mathbf{r}\otimes_{A/{\mathbf{r}}}-$. Here, $\mathbf{r}^{\otimes 0}=A/\mathbf{r}$ and $\mathbf{r}^{\otimes i+1}=\mathbf{r} \otimes_{A/\mathbf{r}}  (\mathbf{r}^{\otimes i})$. We set $\Gamma(A)$ to be the direct limit of this chain. Since each algebra ${\rm End}_{A/{\mathbf{r}}}(\mathbf{r}^{\otimes i})$ is semisimple, the algebra $\Gamma(A)$ is regular. It is called the
\emph{associated regular algebra} of $A$. We refer to \cite[19.26B, Example]{Fai76} for a related construction.

We recall the \emph{associated $\Gamma(A)$-$\Gamma(A)$-bimodules} $K^n(A)$ of $A$, $n\in \mathbb{Z}$.
 For $i\geq {\rm max}\{0, n\}$, ${\rm Hom}_{A/\mathbf{r}}(\mathbf{r}^{\otimes i}, \mathbf{r}^{\otimes i-n})$
 has a natural ${\rm End}_{A/\mathbf{r}}(\mathbf{r}^{\otimes i-n})$-${\rm End}_{A/\mathbf{r}}(\mathbf{r}^{\otimes i})$-bimodule structure. Consider a chain of maps ${\rm Hom}_{A/\mathbf{r}}(\mathbf{r}^{\otimes i}, \mathbf{r}^{\otimes i-n})\rightarrow {\rm Hom}_{A/\mathbf{r}}(\mathbf{r}^{\otimes i+1}, \mathbf{r}^{\otimes i+1-n})$, which are
 induced by $\mathbf{r}\otimes_{A/\mathbf{r}}-$, and define $K^n(A)$ to be its direct limit. Then $K^n(A)$ is naturally a $\Gamma(A)$-$\Gamma(A)$-bimodule for each $n\in \mathbb{Z}$. Observe that $K^0(A)={_{\Gamma(A)}\Gamma(A)_{\Gamma(A)}}$ as $\Gamma(A)$-$\Gamma(A)$-bimodules.

\begin{prop}\label{prop:K^n(A)}
Let $A$ be an artin algebra with radical square zero. Then there is a natural isomorphism
$$K^n(A)\simeq {\rm Hom}_{{\mathbf{D}_{\rm sg}(A)}}(q(A/\mathbf{r}), q(A/\mathbf{r})[n])$$
for each $n\in \mathbb{Z}$.
\end{prop}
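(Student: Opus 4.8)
The plan is to compute $\mathrm{Hom}_{\mathbf{D}_{\mathrm{sg}}(A)}(q(A/\mathbf{r}), q(A/\mathbf{r})[n])$ via the Keller--Vossieck formula in Proposition \ref{prop:singlimit}, identifying the relevant stable Hom-spaces of syzygies with the spaces $\mathrm{Hom}_{A/\mathbf{r}}(\mathbf{r}^{\otimes i}, \mathbf{r}^{\otimes i-n})$ whose direct limit defines $K^n(A)$. I would first treat the case $n\geq 0$ and then handle $n<0$ by a symmetric argument (or deduce it from the $n\geq 0$ case together with the identity $q(\Omega^n(M))\simeq q(M)[-n]$). Throughout I write $\bar A=A/\mathbf{r}$.

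First, for $n \geq 0$ and $i \geq n$, Lemma \ref{lem:omega} gives natural isomorphisms $\Omega^i(S)\simeq \mathbf{r}^{\otimes i}\otimes_{\bar A}S$ for semisimple $S$; applying this to the various simple modules and using that $\bar A$ is semisimple so that every semisimple module is a summand of (a sum of copies of) $\bar A$, one gets $\Omega^i(A/\mathbf{r})\simeq \mathbf{r}^{\otimes i}$ as $A$-modules (viewing $\mathbf{r}^{\otimes i}$ as a semisimple $A$-module via $\bar A$), and these isomorphisms are compatible with the syzygy maps. Hence by Proposition \ref{prop:singlimit},
\[
\mathrm{Hom}_{\mathbf{D}_{\mathrm{sg}}(A)}(q(A/\mathbf{r}), q(A/\mathbf{r})[n]) \;\simeq\; \varinjlim_i \underline{\mathrm{Hom}}_A(\Omega^{i}(A/\mathbf{r}), \Omega^{i-n}(A/\mathbf{r})) \;\simeq\; \varinjlim_i \underline{\mathrm{Hom}}_A(\mathbf{r}^{\otimes i}, \mathbf{r}^{\otimes i-n}).
\]
The next step is to replace $\underline{\mathrm{Hom}}_A$ by $\mathrm{Hom}_{\bar A}$: since $\mathbf{r}^{\otimes i}$ and $\mathbf{r}^{\otimes i-n}$ are semisimple $A$-modules and $\mathbf{r}\cdot\mathbf{r}=0$, every $A$-module map between them is automatically an $\bar A$-module map, so $\mathrm{Hom}_A(\mathbf{r}^{\otimes i},\mathbf{r}^{\otimes i-n})=\mathrm{Hom}_{\bar A}(\mathbf{r}^{\otimes i},\mathbf{r}^{\otimes i-n})$; moreover a map factoring through a projective $A$-module factors through a projective whose semisimple quotient absorbs it, and one checks the image of such a map in the direct limit is zero because tensoring up by $\mathbf{r}$ kills the relevant composite (the projective cover $P\to\mathbf{r}^{\otimes i-n}$ has $\mathbf{r} P\subseteq$ the next syzygy, so $\mathbf{r}\otimes_{\bar A}(-)$ annihilates maps through $P$). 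Thus the connecting maps in $\varinjlim_i\underline{\mathrm{Hom}}_A(\mathbf{r}^{\otimes i},\mathbf{r}^{\otimes i-n})$ agree, under this identification, with the maps $\mathrm{Hom}_{\bar A}(\mathbf{r}^{\otimes i},\mathbf{r}^{\otimes i-n})\to\mathrm{Hom}_{\bar A}(\mathbf{r}^{\otimes i+1},\mathbf{r}^{\otimes i+1-n})$ induced by $\mathbf{r}\otimes_{\bar A}-$ that define $K^n(A)$, and the limits coincide. Finally one checks naturality of the whole chain of isomorphisms in $A$ (really: compatibility of all the identifications), which is routine.

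The main obstacle, I expect, is the careful bookkeeping in the middle step: showing that passing from $\underline{\mathrm{Hom}}_A$ to $\mathrm{Hom}_{\bar A}$ does not change the direct limit, i.e. that the ``projective-factoring'' submodule $\mathbf{p}(\mathbf{r}^{\otimes i},\mathbf{r}^{\otimes i-n})$ maps to zero in the colimit. The key point is that a morphism $f\colon \mathbf{r}^{\otimes i}\to\mathbf{r}^{\otimes i-n}$ factoring through a projective $P$ can be arranged to factor through the projective cover of $\mathbf{r}^{\otimes i-n}$; applying $\mathbf{r}\otimes_{\bar A}-$ (which computes $\Omega$ on semisimples) sends this composite to a map that factors through $\mathbf{r}\otimes_{\bar A}(\text{semisimple part of }P) = \Omega$ of that part, and one verifies this is already $0$ in $\underline{\mathrm{Hom}}$ after one more step — so stable Hom and honest Hom have the same colimit. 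Once this is pinned down, assembling the isomorphism $K^n(A)\simeq \mathrm{Hom}_{\mathbf{D}_{\mathrm{sg}}(A)}(q(A/\mathbf{r}),q(A/\mathbf{r})[n])$ and noting it respects the $\Gamma(A)$-bimodule structures (the bimodule actions on both sides are induced by composition of maps / the endomorphism algebras $\mathrm{End}_{\bar A}(\mathbf{r}^{\otimes i})$, whose colimit is $\Gamma(A)$) is straightforward.
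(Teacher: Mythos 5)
Your overall route is the same as the paper's proof of Proposition~\ref{prop:K^n(A)}: apply the Keller--Vossieck formula (Proposition~\ref{prop:singlimit}) together with Lemmas~\ref{lem:exactsequence} and~\ref{lem:omega} to identify ${\rm Hom}_{\mathbf{D}_{\rm sg}(A)}(q(A/\mathbf{r}), q(A/\mathbf{r})[n])$ with $\varinjlim_i \underline{\rm Hom}_A(\mathbf{r}^{\otimes i}, \mathbf{r}^{\otimes i-n})$, observe that ${\rm Hom}_A = {\rm Hom}_{A/\mathbf{r}}$ on these semisimple modules so that there is a canonical surjection $K^n(A) = \varinjlim_i {\rm Hom}_{A/\mathbf{r}}(\mathbf{r}^{\otimes i}, \mathbf{r}^{\otimes i-n}) \twoheadrightarrow \varinjlim_i \underline{\rm Hom}_A(\mathbf{r}^{\otimes i}, \mathbf{r}^{\otimes i-n})$, and then prove this is injective. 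You correctly locate the one non-trivial point: a map $f\colon \mathbf{r}^{\otimes i}\to \mathbf{r}^{\otimes i-n}$ that is stably zero must become \emph{honestly} zero after applying $\mathbf{r}\otimes_{A/\mathbf{r}}-$.

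It is this last step where your argument has a gap. The correct reason, and the one the paper uses, is that any such stably-zero $f$ in fact factors through a \emph{semisimple} projective module, and $\mathbf{r}\otimes_{A/\mathbf{r}}-$ vanishes on semisimple projectives (each simple projective $S$ has $\mathbf{r}\otimes_{A/\mathbf{r}}S\simeq \Omega(S)=0$), whence $\mathbf{r}\otimes_{A/\mathbf{r}}f=0$. The factorization through a semisimple projective is exactly where $\mathbf{r}^2=0$ enters: write $f=h\circ g$ with $g\colon \mathbf{r}^{\otimes i}\to P$ and $h\colon P\to \mathbf{r}^{\otimes i-n}$, $P$ projective; since $\mathbf{r}^{\otimes i}$ is semisimple, ${\rm Im}\,g\subseteq{\rm soc}(P)$, and for every non-simple indecomposable projective summand $P'$ of $P$ one has ${\rm soc}(P')=\mathbf{r}P'$, which $h$ annihilates because its target is semisimple — so the composite through $P'$ vanishes and $f$ factors through the direct sum of the \emph{simple} projective summands. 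Your parenthetical explanation (``the projective cover $P\to\mathbf{r}^{\otimes i-n}$ has $\mathbf{r}P$ inside the next syzygy, so $\mathbf{r}\otimes_{A/\mathbf{r}}(-)$ annihilates maps through $P$'') does not parse as written: $P$ is an $A$-module, not an $A/\mathbf{r}$-module, so $\mathbf{r}\otimes_{A/\mathbf{r}}-$ cannot be applied to the factorization $f=h\circ g$ directly, and ``$\mathbf{r}\otimes_{A/\mathbf{r}}(\text{semisimple part of }P)=\Omega$ of that part'' does not by itself give vanishing (the socle of $P$ is not projective in general). Once the factorization through a semisimple projective is in place, the rest of your argument goes through and matches the paper's proof.
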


\begin{proof}
Consider the case $n\leq 0$ first. In this case, by Lemmas \ref{lem:exactsequence} and  \ref{lem:omega} we have $q(A/\mathbf{r})[n]\simeq q(\Omega^{-n}(A/\mathbf{r}))\simeq q(\mathbf{r}^{\otimes -n})$. Then Proposition
\ref{prop:singlimit} yields an isomorphism ${\rm Hom}_{{\mathbf{D}_{\rm sg}(A)}}(q(A/\mathbf{r}), q(A/\mathbf{r})[n])\simeq \varinjlim {\rm \underline{Hom}}_A(\Omega^i(A/\mathbf{r}), \Omega^i(\mathbf{r}^{\otimes -n}))$. By Lemma \ref{lem:omega}
again we have $\Omega^i(A/\mathbf{r})\simeq \mathbf{r}^{\otimes i}$ and $\Omega^i(\mathbf{r}^{\otimes -n})=\mathbf{r}^{\otimes i-n}$. Then we have a surjective map $\psi\colon K^n(A)\rightarrow {\rm Hom}_{{\mathbf{D}_{\rm sg}(A)}}(q(A/\mathbf{r}), q(A/\mathbf{r})[n])$. On the other hand, every morphism $f\colon \mathbf{r}^{\otimes i}\rightarrow \mathbf{r}^{\otimes i-n}$
 that is zero in $A\mbox{-\underline{mod}}$
 necessarily factors through a semisimple projective module. However, the functor $\mathbf{r}\otimes_{A/\mathbf{r}}-$ vanishes
 on semisimple projective modules. Then $\mathbf{r}\otimes_{A/\mathbf{r}} f$ is zero. This forces that $\psi$ is injective.
 We are done in this case.

 For the case $n>0$, we observe that ${\rm Hom}_{{\mathbf{D}_{\rm sg}(A)}}(q(A/\mathbf{r}), q(A/\mathbf{r})[n])$ is
 isomorphic to ${\rm Hom}_{{\mathbf{D}_{\rm sg}(A)}}(q(A/\mathbf{r})[-n], q(A/\mathbf{r}))$, and by the same argument
 as above, it is isomorphic to $\varinjlim {\rm \underline{Hom}}_{A/\mathbf{r}}(\mathbf{r}^{\otimes i+n}, \mathbf{r}^{\otimes i})$.
 Then we get a surjective map $K^n(A)\rightarrow {\rm Hom}_{{\mathbf{D}_{\rm sg}(A)}}(q(A/\mathbf{r}), q(A/\mathbf{r})[n])$.
Similarly as above, we have that this map is injective.
\end{proof}

\begin{rem}\label{rem:bimodule}
In the case $n=0$, the above isomorphism is an isomorphism $\Gamma(A)\simeq {\rm End}_{\mathbf{D}_{\rm sg}(A)}(q(A/\mathbf{r}))$ of algebras. Then for an arbitrary $n$, the above isomorphism becomes an isomorphism of $\Gamma(A)$-$\Gamma(A)$-bimodules.
\end{rem}

Recall that an abelian category $\mathcal{A}$ is \emph{semisimple} provided that each short exact sequence
splits. For example, for a regular algebra $\Gamma$, the category ${\rm proj}\; \Gamma$ of finitely generated right projective $\Gamma$-modules is a semisimple abelian category. Here, we use the fact that all finitely presented $\Gamma$-modules
are projective; see \cite[Theorem and Definition 11.24(a)]{Fai73}.

The following observation is well known.

\begin{lem}\label{lem:abelian}
Let $\mathcal{A}$ be a semisimple abelian category, and let $\Sigma$ be an auto-equivalence on $\mathcal{A}$. Then there is
a unique triangulated structure on $\mathcal{A}$ with $\Sigma$ the shift functor.
\end{lem}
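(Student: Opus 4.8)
\textbf{Proof proposal for Lemma \ref{lem:abelian}.}
The plan is to define the triangles explicitly, check the axioms, and then show uniqueness by observing that any triangulated structure with shift $\Sigma$ must have the same distinguished triangles. First I would declare a candidate class of triangles: a sequence $X\xrightarrow{f} Y\xrightarrow{g} Z\xrightarrow{h}\Sigma X$ is distinguished if and only if it is isomorphic (as a diagram) to a direct sum of ``elementary'' triangles, namely $X\xrightarrow{\mathrm{id}} X\to 0\to \Sigma X$ and $0\to Z\xrightarrow{\mathrm{id}} Z\to 0$ together with, for each monomorphism $X\hookrightarrow Y$ with cokernel $Z$, the split sequence $X\xrightarrow{f} Y\xrightarrow{g} Z\xrightarrow{0}\Sigma X$ where $f,g$ come from a chosen splitting $Y\simeq X\oplus Z$ (semisimplicity guarantees such a splitting exists). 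Equivalently, since every short exact sequence splits, one can phrase it as: the distinguished triangles are exactly those isomorphic to $X\xrightarrow{\binom{1}{0}} X\oplus Z\xrightarrow{(0\ 1)} Z\xrightarrow{0}\Sigma X$.

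Next I would verify the axioms (TR1)--(TR4). (TR1) is immediate from the definition, using that $X\xrightarrow{\mathrm{id}}X\to 0\to\Sigma X$ is elementary and that any morphism $f\colon X\to Y$ in a semisimple category splits as a direct sum of an isomorphism onto its image and a zero map, so $f$ can be completed to such a triangle. (TR2), the rotation axiom, reduces by additivity to checking it on elementary triangles, where it is a direct calculation using that $\Sigma$ is additive and that the connecting maps are zero. (TR3) again reduces to elementary triangles: given a morphism of the base maps $(u,v)$ between two split triangles, semisimplicity lets one build the third component $w\colon Z\to Z'$ degreewise. The octahedral axiom (TR4) is the most bookkeeping-heavy: given composable $X\to Y\to Z$, one decomposes everything into direct sums of simple objects, reduces to the split situation, and constructs the octahedron by hand; here one uses repeatedly that subobjects and quotients split off as direct summands.

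For uniqueness, I would argue that in \emph{any} triangulated structure on $\mathcal{A}$ with shift $\Sigma$, a short exact sequence $0\to X\xrightarrow{f} Y\xrightarrow{g} Z\to 0$ must fit into a distinguished triangle $X\xrightarrow{f} Y\xrightarrow{g} Z\xrightarrow{h}\Sigma X$: indeed complete $f$ to a triangle $X\xrightarrow{f} Y\to C\to\Sigma X$, and the long exact $\mathrm{Hom}$-sequences (or the standard fact that the cone of a monomorphism-with-cokernel-$Z$ has cohomology only in the cokernel) together with semisimplicity force $C\simeq Z$ compatibly with $g$; since $\mathrm{Hom}(Z,\Sigma X)\cong\mathrm{Hom}(Z, \Sigma X)$ need not vanish, $h$ is some morphism, but one checks that $h=0$ because the triangle splits — $g$ is a split epimorphism, so the connecting map $h$ vanishes by the standard criterion that a triangle $X\to Y\to Z\xrightarrow{h}\Sigma X$ splits iff $h=0$ iff $Y\to Z$ is a split epi. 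Conversely every distinguished triangle in any such structure is, up to isomorphism, a rotation of one coming from a short exact sequence (completing any morphism to a triangle and splitting off its image), hence is of the prescribed form. Thus the two classes of distinguished triangles coincide, giving uniqueness. The main obstacle I anticipate is the octahedral axiom: while conceptually trivial in a semisimple category, writing down the connecting morphisms and checking commutativity of the relevant diagram requires care, and it is tempting to hand-wave it; I would reduce it cleanly to the case of split triangles first so that every map in sight is a matrix of morphisms between semisimple objects.
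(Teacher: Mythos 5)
Your strategy is the same as the paper's: use semisimplicity to decompose every morphism as a direct sum of an isomorphism and zero maps, and conclude that every distinguished triangle must split into trivial summands, which pins the triangulated structure down uniquely. The paper records this in one line by listing the three types of trivial triangle $K\to 0\to\Sigma K\xrightarrow{\mathrm{id}}\Sigma K$, $I\xrightarrow{\mathrm{id}}I\to 0\to\Sigma I$, and $0\to C\xrightarrow{\mathrm{id}}C\to 0$, one for each summand $K\to 0$, $I\xrightarrow{\sim} I$, $0\to C$ of a general morphism.

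The gap in your write-up is that your list of elementary triangles omits the first of these three types, and the class you define is consequently not closed under rotation, so (TR2) fails. You build distinguished triangles as direct sums of $X\xrightarrow{\mathrm{id}}X\to 0\to\Sigma X$ and $0\to Z\xrightarrow{\mathrm{id}}Z\to 0$ (your third ``split sequence'' $X\to X\oplus Z\to Z\xrightarrow{0}\Sigma X$ is already such a direct sum), and you even summarize the class as ``those isomorphic to $X\to X\oplus Z\to Z\xrightarrow{0}\Sigma X$,'' i.e.\ all connecting morphisms vanish. But rotating $Z\xrightarrow{\mathrm{id}}Z\to 0\to\Sigma Z$ yields $Z\to 0\to\Sigma Z\xrightarrow{\mathrm{id}}\Sigma Z$, whose connecting map is an isomorphism; this is not a direct sum of your elementary triangles, since any such sum with middle term $0$ must have all terms zero. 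The missing triangle $K\to 0\to\Sigma K\xrightarrow{\mathrm{id}}\Sigma K$ is exactly the cone on the zero map $K\to 0$, which inevitably appears when you complete a general $f\colon X\to Y$ to a triangle, because $f\cong(\ker f\to 0)\oplus(\mathrm{im}\,f\xrightarrow{\sim}\mathrm{im}\,f)\oplus(0\to\mathrm{coker}\,f)$ and the first summand contributes $\Sigma(\ker f)$ to the cone. Once you add this third type of elementary triangle, your verification of the axioms and your uniqueness argument (via the split-epimorphism criterion for $h=0$) go through as intended.
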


The obtained triangulated category in this lemma will be denoted by $(\mathcal{A}, \Sigma)$.

\begin{proof}
We use the fact that each morphism in $\mathcal{A}$ is isomorphic to a direct sum of morphisms
of the forms $K\rightarrow 0$, $I\stackrel{{\rm Id}_I}\rightarrow I$ and $0\rightarrow C$. Then
all possible triangles are a direct sum of the following trivial triangles $K\rightarrow 0\rightarrow \Sigma(K)\stackrel{{\rm Id}_{\Sigma(K)}}\rightarrow \Sigma(K)$, $I\stackrel{{\rm Id}_I}\rightarrow I \rightarrow 0\rightarrow \Sigma(I)$ and
$0\rightarrow C\stackrel{{\rm Id}_C}\rightarrow C\rightarrow \Sigma(0)$.
\end{proof}

\begin{prop}\label{prop:sg}
Let $A$ be an artin algebra with radical square zero and let $\Gamma(A)$ be its associated
regular algebra. Then there is a triangle equivalence
$$ \Psi\colon \mathbf{D}_{\rm sg}(A)\simeq ({\rm proj}\; \Gamma(A), \Sigma)$$
for some auto-equivalence $\Sigma$ on ${\rm proj}\; \Gamma(A)$, which sends $q(A/\mathbf{r})$ to $\Gamma(A)$.
\end{prop}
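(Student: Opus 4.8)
The plan is to show that $A$ is $\Omega^\infty$-finite with $\Omega^\infty$-generator $E = A/\mathbf{r}$, and then invoke Proposition \ref{prop:sing} together with the computation of $\operatorname{End}_{\mathbf{D}_{\rm sg}(A)}(q(A/\mathbf{r}))$ from Proposition \ref{prop:K^n(A)} and Remark \ref{rem:bimodule}. First I would observe that for an algebra with radical square zero, the syzygy of any module is semisimple: if $M$ has projective cover $P \to M$ with kernel $\Omega(M)$, then $\Omega(M) \subseteq \mathbf{r}P$, and $\mathbf{r}P$ is semisimple because $\mathbf{r}\cdot(\mathbf{r}P) = \mathbf{r}^2 P = 0$. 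Hence $\Omega^1(A\mbox{-mod}) \subseteq {\rm add}(A \oplus A/\mathbf{r})$, since every semisimple module is a direct summand of a sum of copies of $A/\mathbf{r}$ and (taking $i=1$, $n_0=1$ in the language of the preliminaries) every object of $\Omega^1(A\mbox{-mod})$ is of the form $P \oplus \Omega(M)$ for $P$ projective. Thus $A$ is $\Omega^\infty$-finite with $\Omega^\infty$-generator $E = A/\mathbf{r}$.

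Applying Proposition \ref{prop:sing} to this generator gives an equivalence of categories
$$\mathbf{D}_{\rm sg}(A) \simeq {\rm proj}\; {\rm End}_{\mathbf{D}_{\rm sg}(A)}(q(A/\mathbf{r}))$$
sending $q(A/\mathbf{r})$ to the ring ${\rm End}_{\mathbf{D}_{\rm sg}(A)}(q(A/\mathbf{r}))$. By Proposition \ref{prop:K^n(A)} with $n=0$ and Remark \ref{rem:bimodule}, this endomorphism ring is isomorphic as an algebra to $\Gamma(A)$. Combining, we obtain an equivalence of additive categories $\Psi \colon \mathbf{D}_{\rm sg}(A) \xrightarrow{\sim} {\rm proj}\; \Gamma(A)$ carrying $q(A/\mathbf{r})$ to $\Gamma(A)$. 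Since the source category $\mathbf{D}_{\rm sg}(A)$ is triangulated, transporting its triangulated structure along $\Psi$ endows ${\rm proj}\; \Gamma(A)$ with a triangulated structure; let $\Sigma$ denote the resulting shift functor (an auto-equivalence of ${\rm proj}\; \Gamma(A)$). Then $\Psi$ is by construction a triangle equivalence $\mathbf{D}_{\rm sg}(A) \simeq ({\rm proj}\; \Gamma(A), \Sigma)$, and Lemma \ref{lem:abelian} guarantees that $({\rm proj}\; \Gamma(A), \Sigma)$ is the essentially unique triangulated structure with shift $\Sigma$ on the semisimple abelian category ${\rm proj}\; \Gamma(A)$ (which is semisimple abelian by the remark preceding Lemma \ref{lem:abelian}).

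The main obstacle is not conceptual but bookkeeping: one must be careful that the equivalence produced by Proposition \ref{prop:sing} via projectivization is genuinely additive and that the identification of $\operatorname{End}_{\mathbf{D}_{\rm sg}(A)}(q(A/\mathbf{r}))$ with $\Gamma(A)$ is as algebras (not merely as $R$-modules), which is exactly what Remark \ref{rem:bimodule} records. A secondary point to check is that every semisimple $A$-module really lies in ${\rm add}(A/\mathbf{r})$ — this holds because $A/\mathbf{r}$ is a finite product of simple modules (one per isoclass of simple $A$-module, each appearing with positive multiplicity since $A$ is artin), so ${\rm add}(A/\mathbf{r})$ contains every simple module and hence every semisimple module. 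I do not, at this stage, pin down $\Sigma$ explicitly; the identification $\Sigma = -\otimes_{\Gamma(A)} K^1(A)$ will come afterward from Proposition \ref{prop:K^n(A)} with $n=1$, which identifies $\operatorname{Hom}_{\mathbf{D}_{\rm sg}(A)}(q(A/\mathbf{r}), q(A/\mathbf{r})[1])$ with $K^1(A)$ as bimodules over $\Gamma(A) \cong \operatorname{End}_{\mathbf{D}_{\rm sg}(A)}(q(A/\mathbf{r}))$.
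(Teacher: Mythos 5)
Your proof is correct and follows essentially the same route as the paper: observe $\Omega^1(A\mbox{-mod})\subseteq{\rm add}(A\oplus A/\mathbf{r})$, apply Proposition \ref{prop:sing} with $E=A/\mathbf{r}$, identify the endomorphism algebra with $\Gamma(A)$ via Proposition \ref{prop:K^n(A)} and Remark \ref{rem:bimodule}, transport the triangulated structure, and invoke Lemma \ref{lem:abelian} for uniqueness. The extra explanations you supply (why $\Omega(M)$ is semisimple, why every semisimple module lies in ${\rm add}(A/\mathbf{r})$) are correct and merely unpack points the paper leaves implicit.
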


\begin{proof}
We observe that for any $A$-module $M$, its syzygy $\Omega(M)$ is semisimple. Hence we have
$\Omega^{1}(A\mbox{-mod})\subseteq {\rm add}\; (A\oplus A/\mathbf{r})$. We apply Proposition \ref{prop:sing}
to obtain an equivalence of categories $\mathbf{D}_{\rm sg}(A) \simeq {\rm proj}\; {\rm End}_{\mathbf{D}_{\rm sg}(A)}(q(A/\mathbf{r}))$. By Proposition \ref{prop:K^n(A)} this yields an equivalence of categories $\mathbf{D}_{\rm sg}(A)\simeq {\rm proj}\; \Gamma(A)$.

 By transport of structures, the shift functor $[1]$ on $\mathbf{D}_{\rm sg}(A)$ corresponds to an auto-equivalence
 $\Sigma$ on ${\rm proj}\; \Gamma(A)$, and then ${\rm proj}\; \Gamma(A)$ becomes a triangulated category. However, by Lemma
\ref{lem:abelian} the semisimple abelian category ${\rm proj}\; \Gamma(A)$ has a unique triangulated structure with $\Sigma$
the shift functor. Then this structure necessarily coincides with the transported one. Then we are done.
\end{proof}

We are interested in the auto-equivalence $\Sigma$ above. The following result characterizes
it using the bimodules $K^n(A)$.

\begin{lem}\label{lem:Sigma}
Use the notation as above. Then for each $n\in \mathbb{Z}$, the auto-equivalence $\Sigma^n$ is isomorphic
to $-\otimes_{\Gamma(A)}K^n(A)\colon {\rm proj}\; \Gamma(A)\rightarrow {\rm proj}\; \Gamma(A)$.
\end{lem}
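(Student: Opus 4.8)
The plan is to transport the entire statement across the equivalence $\Psi$ from Proposition \ref{prop:sg} and reduce everything to a statement inside $\mathbf{D}_{\rm sg}(A)$. Recall that $\Psi$ is the composite of the projectivization functor ${\rm Hom}_{\mathbf{D}_{\rm sg}(A)}(q(A/\mathbf{r}), -)$ with the identification ${\rm End}_{\mathbf{D}_{\rm sg}(A)}(q(A/\mathbf{r}))\simeq \Gamma(A)$ from Remark \ref{rem:bimodule}, and it carries $q(A/\mathbf{r})$ to the regular module $\Gamma(A)_{\Gamma(A)}$ and the shift $[1]$ to $\Sigma$. Under this dictionary, the object $\Gamma(A)$ of ${\rm proj}\; \Gamma(A)$ corresponds to $q(A/\mathbf{r})$, so $\Sigma^n(\Gamma(A))$ corresponds to $q(A/\mathbf{r})[n]$, and hence
\[
\Sigma^n(\Gamma(A)) \;\cong\; {\rm Hom}_{\mathbf{D}_{\rm sg}(A)}(q(A/\mathbf{r}), q(A/\mathbf{r})[n]) \;\cong\; K^n(A)
\]
as right $\Gamma(A)$-modules, where the last isomorphism is Proposition \ref{prop:K^n(A)} together with Remark \ref{rem:bimodule} (note the ${\rm Hom}$-space is a right module via precomposition, which matches the right $\Gamma(A)$-module structure on $K^n(A)$). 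On the other hand, the functor $-\otimes_{\Gamma(A)}K^n(A)$ also sends $\Gamma(A)$ to $K^n(A)$. So both functors $\Sigma^n$ and $-\otimes_{\Gamma(A)}K^n(A)$ agree on the generator $\Gamma(A)$ of ${\rm proj}\; \Gamma(A)$.

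The remaining point is that two additive endofunctors of ${\rm proj}\; \Gamma(A)={\rm add}\; \Gamma(A)_{\Gamma(A)}$ that agree on the generator and are compatible with its endomorphisms must be isomorphic. More precisely, I would argue as follows: $-\otimes_{\Gamma(A)}K^n(A)$ is a right-exact functor out of ${\rm proj}\; \Gamma(A)$, and $\Sigma^n$ is an auto-equivalence of the semisimple abelian category ${\rm proj}\; \Gamma(A)$, hence exact; any additive functor on ${\rm add}\; \Gamma(A)$ is determined up to isomorphism by its value on $\Gamma(A)$ as a module over ${\rm End}_{\Gamma(A)}(\Gamma(A))=\Gamma(A)$, via the standard Morita-type argument (the value on $\Gamma(A)$ is a $\Gamma(A)$-$\Gamma(A)$-bimodule and the functor is naturally $-\otimes_{\Gamma(A)}(\text{that bimodule})$). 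Concretely, I would identify $\Sigma^n$ with $-\otimes_{\Gamma(A)} \Sigma^n(\Gamma(A))$, where $\Sigma^n(\Gamma(A))$ carries the bimodule structure coming from the $\Gamma(A)$-action on the source copy of $\Gamma(A)$ and the functoriality of $\Sigma^n$; then it suffices to match this bimodule with $K^n(A)$. The isomorphism of \emph{right} modules is already in hand; that it respects the \emph{left} $\Gamma(A)$-action is exactly the content of Remark \ref{rem:bimodule}, which says the isomorphism of Proposition \ref{prop:K^n(A)} is one of $\Gamma(A)$-$\Gamma(A)$-bimodules. Hence $\Sigma^n\simeq -\otimes_{\Gamma(A)}K^n(A)$ as functors on ${\rm proj}\; \Gamma(A)$.

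I would also double check the variance and side conventions at each step, since this is where the argument can silently go wrong: the ${\rm Hom}$-functor ${\rm Hom}_{\mathbf{D}_{\rm sg}(A)}(q(A/\mathbf{r}), -)$ turns the left $A$-module origin into a right $\Gamma(A)$-module (consistent with ${\rm proj}\; \Gamma(A)$ being \emph{right} projective modules), the composition maps $\phi^{n,m}$ and the bimodule structures on $K^n(A)$ are set up in the introduction so that $K^n(A)$ is a $\Gamma(A)$-$\Gamma(A)$-bimodule with composition inducing $K^n(A)\otimes_{\Gamma(A)}K^m(A)\to K^{n+m}(A)$, and $\Sigma^n$ on the semisimple category is computed by iterating $\Sigma$, whose effect on $\Gamma(A)$ is $K^1(A)$ by the $n=1$ case. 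The main obstacle is genuinely bookkeeping rather than mathematical depth: one must be careful that the transported shift functor is identified on the nose with $\Sigma$ (which is guaranteed by the uniqueness in Lemma \ref{lem:abelian}, already invoked in Proposition \ref{prop:sg}), and that the bimodule identifications are those, and not their opposites. Given Propositions \ref{prop:sing}, \ref{prop:sg}, \ref{prop:K^n(A)} and Remark \ref{rem:bimodule}, no further hard input is needed.
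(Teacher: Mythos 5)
Your argument is correct and is essentially the paper's proof: transport along $\Psi$ to identify $\Sigma^n(\Gamma(A))$ with $K^n(A)$ as right $\Gamma(A)$-modules via Proposition \ref{prop:K^n(A)}, note $\Sigma^n\simeq -\otimes_{\Gamma(A)}\Sigma^n(\Gamma(A))$ by the standard Morita-type fact, and upgrade to a bimodule isomorphism via Remark \ref{rem:bimodule}. You have merely spelled out the bookkeeping a bit more explicitly than the paper does.
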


\begin{proof}
Recall that the above equivalence $\Psi$ is given by ${\rm Hom}_{\mathbf{D}_{\rm sg}(A)}(q(A/\mathbf{r}),-)$, which
sends $q(A/\mathbf{r})$ to $\Gamma(A)$. The auto-equivalence $\Sigma^n$ corresponds, via $\Psi$, to $[n]$ on $\mathbf{D}_{\rm sg}(A)$. Then by Proposition \ref{prop:K^n(A)} we have an isomorphism  $\phi\colon K^n(A)\stackrel{\sim}\longrightarrow \Sigma^n(\Gamma(A))$ of
right $\Gamma(A)$-modules. Recall that $\Sigma^n(\Gamma(A))$ has a natural $\Gamma(A)$-$\Gamma(A)$-bimodule structure such
that $\Sigma^n$ is isomorphic to $-\otimes_{\Gamma(A)} \Sigma^n(\Gamma(A))$. Thanks to  Remark \ref{rem:bimodule}, the isomorphism $\phi$ is an isomorphism of bimodules. This proves the lemma.
\end{proof}

Recall that for an algebra $\Gamma$, a $\Gamma$-$\Gamma$-bimodule $K$ is \emph{invertible} provided that the functor
$-\otimes_\Gamma K$ induces an auto-equivalence on the category of right $\Gamma$-modules. For details, we refer to
\cite[Definition and Proposition 12.13]{Fai73}.

We recall that for an artin algebra $A$ with radical square zero, the associated $\Gamma(A)$-$\Gamma(A)$-bimodules
$K^n(A)$ are defined to be $\varinjlim {\rm Hom}_{A/\mathbf{r}}(\mathbf{r}^{\otimes i}, \mathbf{r}^{\otimes i-n})$, where $i\geq {\rm max}\{0, n\}$. Then composition of maps between the $A/\mathbf{r}$-modules $\mathbf{r}^{\otimes j}$ yields morphisms
$$\phi^{n, m}\colon K^n(A)\otimes_{\Gamma(A)} K^m(A)\longrightarrow K^{n+m}(A)$$
of $\Gamma(A)$-$\Gamma(A)$-bimodules, for all $n, m\in \mathbb{Z}$. More precisely, let $f\in K^n(A)$ and
$g\in K^m(A)$ be represented by $f'\colon \mathbf{r}^{\otimes j-m} \rightarrow  \mathbf{r}^{\otimes j-m-n}$ and
$g'\colon \mathbf{r}^{\otimes j}\rightarrow \mathbf{r}^{\otimes j-m}$ for some large $j$, respectively.  Then $\phi^{n, m}(f\otimes g)$ is represented by the composite $f'\circ g'$.

The following result is Theorem A.

\begin{thm}\label{thm:A}
Let $A$ be an artin algebra with radical square zero. Use the notation as above. Then for all $n,m\in \mathbb{Z}$,
the $\Gamma(A)$-$\Gamma(A)$-bimodules $K^n(A)$ are invertible and the morphisms $\phi^{n, m}$ are isomorphisms.
\end{thm}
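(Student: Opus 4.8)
The plan is to deduce everything from the structural results already in hand, chiefly Proposition~\ref{prop:K^n(A)}, Remark~\ref{rem:bimodule}, and Lemma~\ref{lem:Sigma}. The key observation is that $\Sigma$ is an auto-equivalence of ${\rm proj}\;\Gamma(A)$, hence $\Sigma^{-1}$ exists and $\Sigma^n\circ\Sigma^m\simeq\Sigma^{n+m}$ canonically; translating these facts across the equivalence $\Psi\colon\mathbf{D}_{\rm sg}(A)\simeq({\rm proj}\;\Gamma(A),\Sigma)$ and through Lemma~\ref{lem:Sigma} should give both claims. I would organize it in two steps.

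\emph{Step 1: invertibility of $K^n(A)$.} By Lemma~\ref{lem:Sigma}, the functor $-\otimes_{\Gamma(A)}K^n(A)$ is isomorphic to $\Sigma^n$ on ${\rm proj}\;\Gamma(A)$, which is an auto-equivalence; so $-\otimes_{\Gamma(A)}K^n(A)$ restricts to an auto-equivalence of ${\rm proj}\;\Gamma(A)={\rm add}\;\Gamma(A)_{\Gamma(A)}$. To upgrade this to an auto-equivalence of all right $\Gamma(A)$-modules (the definition of invertible recalled before the theorem), I would invoke the fact — already used in the excerpt, citing \cite[Theorem and Definition 11.24]{Fai73} — that over the regular algebra $\Gamma(A)$ every finitely presented module is projective, and more generally that ${\rm Mod}\text{-}\Gamma(A)$ is determined by ${\rm proj}\;\Gamma(A)$ via the natural identification ${\rm Mod}\text{-}\Gamma(A)\simeq{\rm Lex}(({\rm proj}\;\Gamma(A))^{\rm op},\mathrm{Ab})$ (ind-completion). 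An equivalence on ${\rm proj}\;\Gamma(A)$ extends uniquely to an equivalence on the completion, and tensoring with $K^n(A)$ is exactly this extension since it commutes with colimits. Alternatively, and perhaps more cleanly, one can produce an explicit inverse: the candidate is $K^{-n}(A)$, and Step~2 below shows $K^n(A)\otimes_{\Gamma(A)}K^{-n}(A)\simeq K^0(A)={}_{\Gamma(A)}\Gamma(A)_{\Gamma(A)}$, which immediately exhibits $-\otimes_{\Gamma(A)}K^n(A)$ as an auto-equivalence of ${\rm Mod}\text{-}\Gamma(A)$ with quasi-inverse $-\otimes_{\Gamma(A)}K^{-n}(A)$. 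So it suffices to prove Step~2.

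\emph{Step 2: $\phi^{n,m}$ is an isomorphism.} The composition morphisms $\phi^{n,m}$ on the $K^n(A)$ correspond, under the identification $K^n(A)\simeq{\rm Hom}_{\mathbf{D}_{\rm sg}(A)}(q(A/\mathbf{r}),q(A/\mathbf{r})[n])$ of Proposition~\ref{prop:K^n(A)}, to the Yoneda-type composition
\[
{\rm Hom}_{\mathbf{D}_{\rm sg}(A)}(q(A/\mathbf{r}),q(A/\mathbf{r})[n])\otimes_{\Gamma(A)}{\rm Hom}_{\mathbf{D}_{\rm sg}(A)}(q(A/\mathbf{r}),q(A/\mathbf{r})[m])\To {\rm Hom}_{\mathbf{D}_{\rm sg}(A)}(q(A/\mathbf{r}),q(A/\mathbf{r})[n+m]),
\]
where one applies $[m]$ to the first factor before composing; this is the point I would verify carefully by tracing through the explicit description of $\phi^{n,m}$ via composites $f'\circ g'$ of maps $\mathbf{r}^{\otimes j}\to\mathbf{r}^{\otimes j-m}\to\mathbf{r}^{\otimes j-m-n}$ and matching it with the limit description of the Hom-spaces in the proof of Proposition~\ref{prop:K^n(A)}. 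Now transport to $({\rm proj}\;\Gamma(A),\Sigma)$ via $\Psi$: since $\Psi(q(A/\mathbf{r}))=\Gamma(A)$ and $\Psi\circ[k]\simeq\Sigma^k\circ\Psi$, the three Hom-spaces become $\Sigma^n(\Gamma(A))$, $\Sigma^m(\Gamma(A))$, $\Sigma^{n+m}(\Gamma(A))$ as bimodules (using Lemma~\ref{lem:Sigma} and Remark~\ref{rem:bimodule}), and the composition map becomes the canonical bimodule morphism
\[
\Sigma^n(\Gamma(A))\otimes_{\Gamma(A)}\Sigma^m(\Gamma(A))\To\Sigma^{n+m}(\Gamma(A))
\]
that expresses the natural isomorphism $\Sigma^n\circ\Sigma^m\simeq\Sigma^{n+m}$ of auto-equivalences. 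Concretely, for an auto-equivalence $\Sigma\simeq-\otimes_{\Gamma(A)}\Sigma(\Gamma(A))$ the iterate $\Sigma^n$ is $-\otimes_{\Gamma(A)}\Sigma^n(\Gamma(A))$ with $\Sigma^n(\Gamma(A))\simeq\Sigma(\Gamma(A))^{\otimes n}$ (tensor powers over $\Gamma(A)$, with negative powers interpreted via the quasi-inverse), and under this identification the displayed map is the obvious associativity/identity isomorphism — in particular an isomorphism. Pulling back through $\Psi$ and Proposition~\ref{prop:K^n(A)}, $\phi^{n,m}$ is an isomorphism of $\Gamma(A)$-$\Gamma(A)$-bimodules.

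\emph{Main obstacle.} The only genuinely delicate point is the bookkeeping in Step~2: checking that the explicitly-defined composition maps $\phi^{n,m}$ between the direct-limit bimodules $K^n(A)$ really do go over, under the isomorphisms of Proposition~\ref{prop:K^n(A)}, to Yoneda composition in $\mathbf{D}_{\rm sg}(A)$ — paying attention to the shift $[m]$ inserted on the first factor and to the sign/indexing conventions for $n,m$ of mixed sign (the cases $n\le 0\le m$, $n,m\le 0$, etc., are handled slightly differently in the proof of Proposition~\ref{prop:K^n(A)}, which first reduces $n>0$ to $-n<0$). Once this compatibility is in place, invertibility and the isomorphism statement for $\phi^{n,m}$ are formal consequences of $\Sigma$ being an auto-equivalence, as above.
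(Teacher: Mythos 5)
Your proposal is correct and follows essentially the same route as the paper's proof: both deduce invertibility of $K^n(A)$ and the isomorphism property of $\phi^{n,m}$ from Lemma~\ref{lem:Sigma} together with the canonical isomorphisms $\Sigma^m\Sigma^n\simeq\Sigma^{n+m}$. You flag explicitly (and rightly, as the genuine content) the compatibility between the concretely defined composition maps $\phi^{n,m}$ and Yoneda composition under the isomorphism of Proposition~\ref{prop:K^n(A)}; the paper leaves this implicit, merely referencing \cite[Proposition 12.9]{Fai73}.
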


\begin{proof}
By Lemma \ref{lem:Sigma} the functor $-\otimes_{\Gamma(A)}K^n(A)\colon {\rm proj}\; \Gamma(A)\rightarrow {\rm proj}\; \Gamma(A)$
is an auto-equivalence for each $n$. This functor extends naturally to an auto-equivalence on the category of all right $\Gamma(A)$-modules.
Then $K^n(A)$ is an invertible bimodule. The second statement follows from  Lemma \ref{lem:Sigma} and the fact that $\Sigma^{m}\Sigma^n$ is isomorphic to $\Sigma^{n+m}$. Here, we use \cite[Proposition 12.9]{Fai73} implicitly.
\end{proof}

We now have Theorem B. Denote the functor $-\otimes_{\Gamma(A)}K^1(A)\colon {\rm proj}\; \Gamma(A)\rightarrow {\rm proj}\; \Gamma(A)$ by $\Sigma_A$.

\begin{thm}\label{thm:B}
Let $A$ be an artin algebra with radical square zero.
Use the notation as above. Then we have a triangle equivalence
$$\mathbf{D}_{\rm sg}(A) \simeq ({\rm proj}\; \Gamma(A), \Sigma_A),$$
which sends $q(A/\mathbf{r})$ to $\Gamma(A)$.
\end{thm}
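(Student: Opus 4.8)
The plan is to assemble Theorem~\ref{thm:B} from the pieces already in place, essentially by observing that Proposition~\ref{prop:sg} and Lemma~\ref{lem:Sigma} together pin down both the underlying category and the shift functor. First I would invoke Proposition~\ref{prop:sg}, which gives a triangle equivalence $\Psi\colon \mathbf{D}_{\rm sg}(A)\simeq ({\rm proj}\;\Gamma(A),\Sigma)$ for \emph{some} auto-equivalence $\Sigma$, sending $q(A/\mathbf{r})$ to $\Gamma(A)$. The only thing left to do is to identify $\Sigma$, up to isomorphism of functors, with $\Sigma_A=-\otimes_{\Gamma(A)}K^1(A)$.

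The identification of the shift functor is exactly the content of Lemma~\ref{lem:Sigma} in the case $n=1$: it states that $\Sigma=\Sigma^1$ is isomorphic to $-\otimes_{\Gamma(A)}K^1(A)=\Sigma_A$. So I would next fix an isomorphism of functors $\alpha\colon\Sigma\xrightarrow{\sim}\Sigma_A$ from that lemma. Now I need to transport the triangulated structure along $\alpha$: since $\Psi$ is a triangle equivalence with shift $\Sigma$, and $\alpha$ is an isomorphism from $\Sigma$ to $\Sigma_A$, one gets a triangulated structure on ${\rm proj}\;\Gamma(A)$ whose shift is $\Sigma_A$, with $\Psi$ still an equivalence of triangulated categories (a triangle $X\to Y\to Z\to\Sigma X$ is sent to $X\to Y\to Z\xrightarrow{\alpha_X}\Sigma_A X$). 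Here I would cite the general fact, already used in the proof of Proposition~\ref{prop:sg}, that an equivalence of categories intertwining auto-equivalences up to natural isomorphism transports a triangulated structure.

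Finally, I would appeal to Lemma~\ref{lem:abelian}: since ${\rm proj}\;\Gamma(A)$ is a semisimple abelian category (because $\Gamma(A)$ is regular) and $\Sigma_A$ is an auto-equivalence, there is a \emph{unique} triangulated structure on ${\rm proj}\;\Gamma(A)$ with shift functor $\Sigma_A$, namely the one denoted $({\rm proj}\;\Gamma(A),\Sigma_A)$. The structure transported along $\Psi$ and $\alpha$ must therefore coincide with that canonical one, and so $\Psi$ becomes a triangle equivalence $\mathbf{D}_{\rm sg}(A)\simeq({\rm proj}\;\Gamma(A),\Sigma_A)$ sending $q(A/\mathbf{r})$ to $\Gamma(A)$, as required.

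There is essentially no obstacle here: the theorem is a formal consequence of results already proved, and the main point is simply bookkeeping — keeping track of the shift functor through the equivalence and invoking the uniqueness in Lemma~\ref{lem:abelian} to avoid having to check the axioms directly. If anything deserves a word of care, it is the transport-of-structure step, where one must make sure that replacing $\Sigma$ by the isomorphic $\Sigma_A$ genuinely yields a triangulated category (not merely a pretriangulated one); but this is immediate because isomorphic functors send the same class of candidate triangles, up to isomorphism, to axiom-satisfying configurations, and in any case the uniqueness statement in Lemma~\ref{lem:abelian} makes the verification unnecessary.
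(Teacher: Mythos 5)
Your proof is correct and follows the same route as the paper: Proposition~\ref{prop:sg} supplies the triangle equivalence with an abstract shift $\Sigma$, Lemma~\ref{lem:Sigma} identifies $\Sigma$ with $\Sigma_A$, and the uniqueness in Lemma~\ref{lem:abelian} closes the argument. The paper states the conclusion tersely (``It follows from Proposition~\ref{prop:sg} and Lemma~\ref{lem:Sigma}''), and your version simply makes the transport-of-structure bookkeeping explicit.
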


\begin{proof}
It follows from Proposition \ref{prop:sg} and Lemma \ref{lem:Sigma}.
\end{proof}

Let $A$ be an artin algebra with radical square zero. For each $n\geq 1$, we consider the artin algebra $G^n=A/\mathbf{r}\oplus\mathbf{r}^{\otimes n}$,
which is the \emph{trivial extension} of the $A/\mathbf{r}$-$A/\mathbf{r}$-bimodule $\mathbf{r}^{\otimes n}$ (\cite[p.78]{ARS}).
All these algebras $G^n$ have radical square zero.

The following observation seems to be of independent  interest.

\begin{prop}\label{prop:G^n}
Use the notation as above. Then for each $n\geq 1$, we have a triangle equivalence
$$\mathbf{D}_{\rm sg}(G^n) \simeq ({\rm proj}\; \Gamma(A), \Sigma^n_A).$$
In particular, we have a triangle equivalence $\mathbf{D}_{\rm sg}(A)\simeq \mathbf{D}_{\rm sg}(G^1)$.
\end{prop}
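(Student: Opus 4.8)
The plan is to reduce everything to what we already know about $\mathbf{D}_{\rm sg}(A)$ via Theorem \ref{thm:B}, by identifying the associated regular algebra and associated bimodules of $G^n$ with those of $A$. The key point is that, although $G^n$ and $A$ are genuinely different algebras, their radicals as bimodules over the semisimple quotient are related by $\mathbf{r}_{G^n} = \mathbf{r}^{\otimes n}$ while $G^n/\mathbf{r}_{G^n} = A/\mathbf{r}$; hence the defining chain for $\Gamma(G^n)$ consists of the algebras ${\rm End}_{A/\mathbf{r}}((\mathbf{r}^{\otimes n})^{\otimes i}) = {\rm End}_{A/\mathbf{r}}(\mathbf{r}^{\otimes ni})$, connected by the maps induced from $\mathbf{r}^{\otimes n}\otimes_{A/\mathbf{r}}-$. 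This is a cofinal subchain of the chain defining $\Gamma(A)$ (whose $i$-th term is ${\rm End}_{A/\mathbf{r}}(\mathbf{r}^{\otimes i})$), with the transition maps being the $n$-fold composites of those for $\Gamma(A)$; since direct limits are insensitive to passing to cofinal subsystems, I would conclude $\Gamma(G^n)\cong\Gamma(A)$ as algebras.

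First I would carry out this cofinality argument carefully, checking that the connecting homomorphisms match up, so that $\Gamma(G^n)\cong\Gamma(A)$. Next, I would run the same comparison for the associated bimodules: by definition $K^m(G^n) = \varinjlim_i {\rm Hom}_{A/\mathbf{r}}((\mathbf{r}^{\otimes n})^{\otimes i}, (\mathbf{r}^{\otimes n})^{\otimes i-m}) = \varinjlim_i {\rm Hom}_{A/\mathbf{r}}(\mathbf{r}^{\otimes ni}, \mathbf{r}^{\otimes ni - nm})$, and again by cofinality this equals $\varinjlim_j {\rm Hom}_{A/\mathbf{r}}(\mathbf{r}^{\otimes j}, \mathbf{r}^{\otimes j-nm}) = K^{nm}(A)$ as $\Gamma(A)$-$\Gamma(A)$-bimodules. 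In particular $K^1(G^n)\cong K^n(A)$, so the shift auto-equivalence for $G^n$, namely $\Sigma_{G^n} = -\otimes_{\Gamma(G^n)}K^1(G^n)$, is identified under the isomorphism $\Gamma(G^n)\cong\Gamma(A)$ with $-\otimes_{\Gamma(A)}K^n(A)$, which is isomorphic to $\Sigma_A^n$ by Theorem \ref{thm:A} (or directly by Lemma \ref{lem:Sigma}). Applying Theorem \ref{thm:B} to $G^n$ then gives
$$\mathbf{D}_{\rm sg}(G^n)\simeq ({\rm proj}\;\Gamma(G^n), \Sigma_{G^n}) \simeq ({\rm proj}\;\Gamma(A), \Sigma_A^n),$$
the last equivalence being a triangle equivalence because an equivalence of underlying categories intertwining the shift functors (up to natural isomorphism) transports the unique triangulated structure of Lemma \ref{lem:abelian}. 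The final assertion is the case $n=1$ combined with Theorem \ref{thm:B} for $A$ itself, since $\Sigma_{G^1} \cong \Sigma_A$ and hence $\mathbf{D}_{\rm sg}(G^1)\simeq ({\rm proj}\;\Gamma(A),\Sigma_A)\simeq \mathbf{D}_{\rm sg}(A)$.

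The main obstacle I anticipate is bookkeeping rather than anything conceptually deep: one must be scrupulous that the identifications $\Gamma(G^n)\cong\Gamma(A)$ and $K^m(G^n)\cong K^{nm}(A)$ are compatible — i.e., that $K^m(G^n)$ is the correct $\Gamma(A)$-$\Gamma(A)$-bimodule under the \emph{same} isomorphism $\Gamma(G^n)\cong\Gamma(A)$, and that the multiplication maps $\phi^{n,m}$ transport correctly — so that Theorem \ref{thm:B} can legitimately be invoked for $G^n$. One should also note at the outset that $G^n$ indeed has radical square zero with $\mathbf{r}_{G^n}=\mathbf{r}^{\otimes n}$ and $G^n/\mathbf{r}_{G^n}\cong A/\mathbf{r}$, which is immediate from the definition of the trivial extension. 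Once these identifications are pinned down, the result follows formally.
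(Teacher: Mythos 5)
Your proposal is correct and follows essentially the same route as the paper's proof: identify $\Gamma(G^n)$ with $\Gamma(A)$ and $K^1(G^n)$ with $K^n(A)$, deduce $\Sigma_{G^n}\cong\Sigma_A^n$ via Lemma \ref{lem:Sigma}, and invoke Theorem \ref{thm:B}. The paper states the identifications as following ``from the very definition,'' and your cofinality argument for the direct limits, together with the observation that the trivial-extension structure gives $\mathbf{r}_{G^n}=\mathbf{r}^{\otimes n}$ and $G^n/\mathbf{r}_{G^n}\cong A/\mathbf{r}$, simply fills in the detail left implicit there.
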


\begin{proof}
Write $G^n=A'$. Then from the very definition, we have a natural identification $\Gamma(A')=\Gamma(A)$. Moreover,
the $\Gamma(A')$-$\Gamma(A')$-bimodule $K^1(A')$ corresponds to the $\Gamma(A)$-$\Gamma(A)$-bimodule $K^n(A)$.
Then by Lemma \ref{lem:Sigma} $\Sigma_{A'}$ corresponds to $\Sigma^n_A$. Then the result follows from Theorem \ref{thm:B} immediately.
\end{proof}

\begin{rem}
We point out that for $n\geq 2$, $\mathbf{D}_{\rm sg}(G^n)$ might not be triangle equivalent
to $\mathbf{D}_{\rm sg}(A)$, although the underlying categories are equivalent.
\end{rem}

We conclude this section with an example.

\begin{exm}
Let $k$ be a field and let $n\geq 2$. Consider the algebra $A=k[x_1, x_2, \cdots, x_n]/(x_i x_j, 1\leq i, j\leq n)$, which
is with radical square zero. We identify $A/\mathbf{r}$ with $k$, and $\mathbf{r}$ with the $n$-dimensional $k$-space $V=kx_1\oplus kx_2\oplus \cdots \oplus kx_n$. Consequently, for each $i\geq 0$, the algebra ${\rm End}_{A/\mathbf{r}}(\mathbf{r}^{\otimes i})$ is isomorphic to ${\rm End}_k(V^{\otimes i})$, which is identified
with the $n^i\times n^i$ total matrix algebra $M_{n^i}(k)$. Then the associated regular algebra $\Gamma(A)$ is isomorphic to the direct limit
of the following chain of algebra embeddings
$$k\longrightarrow M_n(k)\longrightarrow M_{n^2}(k)\longrightarrow M_{n^3}(k)\longrightarrow \cdots$$
Here, for each algebra $B$, $B\rightarrow M_n(B)$ is the algebra embedding sending $b$ to $bI_{n}$ with $I_n$
the $n\times n$ identity matrix.

We observe that $\Gamma(A)$ is a simple algebra. We point out that this construction is
classical; see \cite[19.26 B, Example]{Fai76}.

Let $1\leq r, s\leq n$. Define $E_{rs}\colon V\rightarrow V$  to be the linear map such that  $E_{rs}(x_i)=\delta_{i,s}x_r$, where
$\delta$ is the Kronecker symbol. Consider,  for all $i\geq 0$, the linear maps $-\otimes_k E_{rs}\colon {\rm End}_k(V^{\otimes i})\rightarrow {\rm End}_k(V^{\otimes i+1})$. Taking the limit, we have the induced linear map $-\otimes_k E_{rs}\colon \Gamma(A)\rightarrow \Gamma(A)$ for each pair of $r, s$. Then we have an isomorphism $\sigma\colon M_n(A)\rightarrow A$ of algebras, which sends an $n\times n$ matrix $(a_{ij})$ to $\sum_{1\leq i, j\leq n}a_{ij}\otimes_k E_{ij}$.

  The associated $\Gamma(A)$-$\Gamma(A)$-bimodule $K^1(A)$ is described as follows. As a $k$-space, $K^1(A)=\Gamma(A)\oplus \Gamma(A)\oplus \cdots \Gamma(A)$ with $n$ copies of $\Gamma(A)$. The left action is given by $a(a_1, a_2, \cdots, a_n)=(aa_1, aa_2, \cdots, aa_n)$, while the right action is given by $(a_1, a_2, \cdots, a_n)a=(a_1, a_2, \cdots, a_n)\sigma^{-1}(a)$.

   We remark that the regular algebra $\Gamma(A)$ is related to a quotient abelian category studied in \cite{Smi}, which might  relate to the singularity category $\mathbf{D}_{\rm sg}(A)$ via a version of Koszul duality. 
\end{exm}

\section{One-point (co)extensions and cyclicizations of algebras}

In this section, we prove that one-point extensions and coextensions of algebras preserve their singularity categories.
We then introduce the notion of cyclicization of an algebra, which is a  repeated operation to remove sources and sinks
on the valued quiver. The obtained result will be used in the proof of Theorem C.

Let $A$ be an artin algebra. Let $D$ be a simple artin algebra and let $_AM_D$ be an $A$-$D$-bimodule, on
which $R$ acts centrally. The \emph{one-point extension} of $A$ by $M$ is the upper triangular
 matrix algebra $A[M]=\begin{pmatrix}A & M \\ 0 & D\end{pmatrix}$. A left $A[M]$-module is denoted by
 a column vector $\begin{pmatrix} X \\ V\end{pmatrix}_\phi$, where $X$ and $V$ are a  left $A$-module and $D$-module,
 respectively, and that $\phi\colon M\otimes_DV\rightarrow X$ is a morphism of $A$-modules. We sometimes
 suppress the morphism $\phi$, when it is clearly understood. For details, we refer to \cite[III.2]{ARS}.

 Recall  from \cite[III.1]{ARS} the notion of \emph{valued quiver} $Q_A$ for an artin algebra $A$. We observe that for the unique simple $D$-module $S$, the
 corresponding $A[M]$-module $\begin{pmatrix} 0\\S \end{pmatrix}$ is simple injective, which corresponds
 to a source in the valued quiver $Q_{A[M]}$ of the one-point extension $A[M]$. Indeed, this valued quiver is obtained from $Q_A$ by adding this
 source together with some valued arrows starting at it.

 One-point extensions of algebras preserve singularity categories. Observe the natural exact embedding $i\colon A\mbox{-mod}\rightarrow A[M]\mbox{-mod}$, which sends
 $_AX$ to $i(X)=\begin{pmatrix}X\\0 \end{pmatrix}$.

 \begin{prop}\label{prop:ope}
 Let $A[M]$ be the one-point extension of $A$ as above. Then the exact embedding $i\colon A\mbox{-{\rm mod}}\rightarrow A[M]\mbox{-{\rm mod}}$ induces a triangle equivalence
 $$\mathbf{D}_{\rm sg}(A)\simeq \mathbf{D}_{\rm sg}(A[M]).$$
 \end{prop}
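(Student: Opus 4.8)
The plan is to prove that the triangle functor $\bar{i}\colon \mathbf{D}_{\rm sg}(A)\to\mathbf{D}_{\rm sg}(A[M])$ induced by the exact embedding $i$ is an equivalence, by establishing density and full faithfulness separately. First I would observe that $\bar{i}$ is well defined: $i$ is exact, and it sends a projective $A$-module to a projective $A[M]$-module, because $i(A)$ is isomorphic to the projective $A[M]$-module $A[M]e_1$ for the idempotent $e_1=\begin{pmatrix}1&0\\0&0\end{pmatrix}$, so $i(P)$ is a summand of a finite sum of copies of $i(A)$. Hence $i$ extends to a triangle functor $\mathbf{D}^b(A\mbox{-mod})\to\mathbf{D}^b(A[M]\mbox{-mod})$ carrying ${\rm perf}(A)$ into ${\rm perf}(A[M])$, which induces $\bar{i}$ with $\bar{i}\circ q=q\circ i$.

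The technical core, and the step I expect to require the most care, is the description of projective covers and syzygies in $A[M]\mbox{-mod}$ in terms of those in $A\mbox{-mod}$. Since $D$ is simple, the Jacobson radical of $A[M]$ is $\begin{pmatrix}\mathbf{r}&M\\0&0\end{pmatrix}$ and every $D$-module is projective. A direct computation with these two facts shows that the projective cover of an arbitrary $A[M]$-module $\begin{pmatrix}X\\V\end{pmatrix}_\phi$ has the form $i(P)\oplus\begin{pmatrix}M\otimes_D V\\V\end{pmatrix}$ for a suitable projective $A$-module $P$, and that the resulting syzygy has zero lower component. Thus $\Omega_{A[M]}(N)$ lies in the essential image of $i$ for every $A[M]$-module $N$, and in particular there is a natural isomorphism $\Omega_{A[M]}(i(X))\cong i(\Omega_A(X))$. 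I would then extract two consequences: (i) every object of $\Omega^1(A[M]\mbox{-mod})$ is, after deleting a projective direct summand, of the form $i(Y)$ for some $A$-module $Y$; and (ii) since every $A[M]$-projective is of the above form and any morphism $i(X)\to i(Y)$ factoring through an $A[M]$-projective already factors through one of type $i(P)$, the embedding induces an isomorphism ${\rm \underline{Hom}}_A(X,Y)\cong{\rm \underline{Hom}}_{A[M]}(i(X),i(Y))$ compatibly with the syzygy functors, that is, a fully faithful functor $A\mbox{-\underline{mod}}\to A[M]\mbox{-\underline{mod}}$ commuting with $\Omega$.

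Granting (i) and (ii), density of $\bar{i}$ follows from Lemma \ref{lem:stalk}: every object of $\mathbf{D}_{\rm sg}(A[M])$ is isomorphic to $q(N)[n]$ with $N\in\Omega^1(A[M]\mbox{-mod})$, and by (i) we have $q(N)=q(i(Y))=\bar{i}(q(Y))$, so the object is $\bar{i}(q(Y)[n])$. For full faithfulness, Lemma \ref{lem:stalk} reduces the claim to showing that $\bar{i}$ is bijective on ${\rm Hom}_{\mathbf{D}_{\rm sg}(A)}(q(M),q(N)[n])$ for all $A$-modules $M,N$ and all $n\in\mathbb{Z}$; shifting, one may assume the first argument unshifted. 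I would then use Lemma \ref{lem:exactsequence} to rewrite the shift $[n]$ through iterated syzygies and invoke Proposition \ref{prop:singlimit} on both sides, so that each of the two Hom groups is identified with a direct limit $\varinjlim_k{\rm \underline{Hom}}_A(\Omega^{k+a}_A M,\Omega^{k+b}_A N)$ with $a=\max\{n,0\}$ and $b=\max\{-n,0\}$; by (ii) the comparison map induced by $\bar{i}$ is the identity on this limit. Hence $\bar{i}$ is a fully faithful dense triangle functor, and so a triangle equivalence. The main obstacle is the syzygy computation above: it is exactly the semisimplicity of $D$ (forced by $D$ being simple) that makes a syzygy over $A[M]$ kill the $D$-component, thereby confining all the relevant homological data to $A\mbox{-mod}$.
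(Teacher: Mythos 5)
Your proof is correct, but it takes a genuinely different route from the paper. The paper works at the level of derived categories via adjoints: it observes that the exact embedding $i$ has a right adjoint $j$ sending $\begin{pmatrix}X\\V\end{pmatrix}_\phi$ to $X/{\rm Im}\,\phi$, with counit $ji\stackrel{\sim}\to{\rm Id}$; it checks that $j$ has cohomological dimension at most one and preserves projectives, so that $\mathbf{L}^bj$ descends to $\mathbf{D}_{\rm sg}(A[M])\to\mathbf{D}_{\rm sg}(A)$ with $\bar{i_*}$ as a right adjoint whose counit remains an isomorphism (citing \cite[Lemma 1.2]{Or04}); full faithfulness follows formally, and density is obtained from the short exact sequence $0\to i(X)\to\begin{pmatrix}X\\V\end{pmatrix}\to\begin{pmatrix}0\\V\end{pmatrix}\to 0$ together with the observation that $\Omega\begin{pmatrix}0\\V\end{pmatrix}$ already lies in the image of $i$. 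Your argument instead stays at the level of stable module categories: you compute that the radical of $A[M]$ is $\begin{pmatrix}\mathbf{r}&M\\0&0\end{pmatrix}$, that projective covers have the form $i(P)\oplus\begin{pmatrix}M\otimes_D V\\V\end{pmatrix}$, hence that every syzygy has zero lower component and $\Omega_{A[M]}\circ i\simeq i\circ\Omega_A$; you also check (correctly, since every map $\begin{pmatrix}M\otimes_D V\\V\end{pmatrix}\to i(Y)$ is zero by the compatibility condition) that $i$ induces an isomorphism on stable $\rm Hom$, and then you feed this into Lemma \ref{lem:stalk}, Lemma \ref{lem:exactsequence}, and the Keller--Vossieck colimit formula of Proposition \ref{prop:singlimit}. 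Both proofs are sound. The paper's is slicker and avoids syzygy bookkeeping; yours is more hands-on and makes the role of the semisimplicity of $D$ transparent, at the cost of one slightly fiddly compatibility check you gloss over, namely that the Keller--Vossieck isomorphism $\Phi$ commutes with $\bar{i}$ — this does hold because $\Phi$ is built from naturality of $\theta$, the identifications in Lemma \ref{lem:exactsequence}, and $i\Omega_A\simeq\Omega_{A[M]}i$, but it deserves a sentence.
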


\begin{proof}
The exact functor $i$ extends naturally to a triangle functor $i_*\colon \mathbf{D}^b(A\mbox{-mod})\rightarrow  \mathbf{D}^b(A[M]\mbox{-mod})$. We observe that $i(A)$ is projective, and then $i_*$ sends perfect complexes to perfect complexes. Then it induces a triangle functor $\bar{i_*}\colon \mathbf{D}_{\rm sg}(A)\rightarrow  \mathbf{D}_{\rm sg}(A[M])$. We claim that $\bar{i_*}$ is an equivalence.

For the claim, recall that the functor $i$ admits a right adjoint $j\colon A[M]\mbox{-mod}\rightarrow A\mbox{-mod}$
which sends $\begin{pmatrix} X \\ V\end{pmatrix}_\phi$ to $X/{{\rm Im}\phi}$. Observe that the corresponding counit $ji\stackrel{\sim}\longrightarrow {\rm Id}_{A\mbox{-mod}}$ is an isomorphism .  One checks that the cohomological dimension (\cite[p.57]{Har66}) of $j$ is at most one. In particular, the left derived functor $\mathbf{L}^b j\colon \mathbf{D}^b(A[M]\mbox{-mod})\rightarrow  \mathbf{D}^b(A\mbox{-mod})$ is defined. Moreover, we have the adjoint pair $(\mathbf{L}^b j, i_*)$, and the counit is an isomorphism.
Since the functor $j$ sends projective modules to projective modules, the functor $\mathbf{L}^b j$ preserves perfect complexes.
Then it induces a triangle functor ${\mathbf{L}^b \bar{j}}\colon \mathbf{D}_{\rm sg}(A[M])\rightarrow \mathbf{D}_{\rm sg}(A)$.
Moreover, we have the induced adjoint pair $({\mathbf{L}^b\bar{j}}, \bar{i_*})$, whose counit $({\mathbf{L}^b\bar{j}})\bar{i_*}\stackrel{\sim}\longrightarrow {\rm Id}_{\mathbf{D}_{\rm sg}(A)}$ is an isomorphism; see \cite[Lemma 1.2]{Or04}. In particular, the functor $\bar{i_*}$ is fully faithful.

It remains to show the denseness of $\bar{i_*}$. We now view the essential image ${\rm Im}\; \bar{i_*}$ of $\bar{i_*}$
as a full triangulated subcategory of $\mathbf{D}_{\rm sg}(A[M])$. It suffices to show
that for  each $A[M]$-module  $\begin{pmatrix} X \\ V\end{pmatrix}$, its image in  $\mathbf{D}_{\rm sg}(A[M])$
 lies in ${\rm Im}\; \bar{i_*}$; see Lemma \ref{lem:stalk}. Observe that $\Omega (\begin{pmatrix}0 \\ V\end{pmatrix})$ lies in ${\rm Im}\; i$, and then by
Lemma \ref{lem:exactsequence}, $q(\begin{pmatrix} 0 \\ V\end{pmatrix})$ lies in  ${\rm Im}\; \bar{i_*}$. The following
natural exact sequence induces a triangle in $\mathbf{D}_{\rm sg}(A[M])$
$$0\longrightarrow i(X)\longrightarrow \begin{pmatrix} X \\ V\end{pmatrix}\longrightarrow \begin{pmatrix} 0 \\ V\end{pmatrix}\longrightarrow 0.$$
This triangle implies that $q(\begin{pmatrix} X \\ V\end{pmatrix})$ lies in ${\rm Im}\; \bar{i_*}$. Then we are done.
\end{proof}

Let $D$ be a simple artin algebra, and let $_DN_A$ be a $D$-$A$-bimodule, on which $R$ acts centrally.
The \emph{one-point coextension} of $A$ by $N$ is the upper triangular matrix algebra $[N]A=\begin{pmatrix} D & N\\ 0 & A\end{pmatrix}$. A left $[N]A$-module is written as  $\begin{pmatrix} V \\ X\end{pmatrix}_\phi$, where $_DV$ and $_AX$
are a left $D$-module and $A$-module, respectively, and that $\phi\colon  M \otimes_A X\rightarrow V$ is a morphism of
$D$-modules. The valued quiver $Q_{[N]A}$ is obtained from $Q_A$ by adding a sink together with some valued arrows
ending at it, where the sink corresponds to the simple projective $[N]A$-module $\begin{pmatrix} S\\ 0\end{pmatrix}$
for a simple $D$-module $S$.

For the one-point coextension $[N]A$, we have an exact embedding $i\colon A\mbox{-mod}\rightarrow [N]A\mbox{-mod}$,
which sends $_AX$ to $i(X)=\begin{pmatrix} 0\\ X\end{pmatrix}$.

The following result is similar to Proposition \ref{prop:ope}, while the proof is simpler. This result
is closely related to \cite[Theorem 4.1(1)]{Ch09}.

\begin{prop}\label{prop:opce} Let $[N]A$ be the one-point coextension as above. Then the embedding $i\colon A\mbox{-{\rm mod}}\rightarrow [N]A\mbox{-{\rm mod}}$ induces a triangle equivalence
$$\mathbf{D}_{\rm sg}(A)\simeq \mathbf{D}_{\rm sg}([N]A).$$
\end{prop}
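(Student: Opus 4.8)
The plan is to imitate the structure of the proof of Proposition \ref{prop:ope}, but exploiting the fact that for the one-point coextension the functor $i$ is the one which has a \emph{left} adjoint that is exact — so the derived adjunction is immediate and no cohomological dimension estimate is needed. Concretely, $i\colon A\mbox{-mod}\rightarrow [N]A\mbox{-mod}$, $X\mapsto \begin{pmatrix}0\\ X\end{pmatrix}$, is exact and sends projectives to projectives (since $\begin{pmatrix}0\\ P\end{pmatrix}$ is a summand of the projective $[N]A$-module associated with $P$), so it extends to a triangle functor $i_*\colon \mathbf{D}^b(A\mbox{-mod})\rightarrow\mathbf{D}^b([N]A\mbox{-mod})$ preserving perfect complexes, hence induces $\bar{i_*}\colon \mathbf{D}_{\rm sg}(A)\rightarrow\mathbf{D}_{\rm sg}([N]A)$.

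First I would produce the adjoint. The functor $j\colon [N]A\mbox{-mod}\rightarrow A\mbox{-mod}$ sending $\begin{pmatrix}V\\ X\end{pmatrix}_\phi$ to $X$ is exact (it simply forgets the top component and the structure map), it is left adjoint to $i$, and the unit ${\rm Id}_{A\mbox{-mod}}\rightarrow j\,i$ is an isomorphism. Being exact, $j$ passes directly to $\mathbf{D}^b$, and since it sends projectives to projectives it preserves perfect complexes, inducing $\bar j\colon \mathbf{D}_{\rm sg}([N]A)\rightarrow\mathbf{D}_{\rm sg}(A)$. The adjoint pair $(\bar j,\bar{i_*})$ is wrong-handed for the counit argument; instead I use that $\bar j\,\bar{i_*}\cong {\rm Id}_{\mathbf{D}_{\rm sg}(A)}$ (from the unit isomorphism $j\,i\cong{\rm Id}$), so $\bar{i_*}$ is fully faithful by the dual of \cite[Lemma 1.2]{Or04} (a triangle functor with a one-sided inverse among adjoints is fully faithful).

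It then remains to show $\bar{i_*}$ is dense. By Lemma \ref{lem:stalk} it suffices to show that for every $[N]A$-module $\begin{pmatrix}V\\ X\end{pmatrix}$ its image in $\mathbf{D}_{\rm sg}([N]A)$ lies in ${\rm Im}\,\bar{i_*}$. There is a natural exact sequence
$$0\longrightarrow i(X)\longrightarrow\begin{pmatrix}V\\ X\end{pmatrix}\longrightarrow\begin{pmatrix}V\\ 0\end{pmatrix}\longrightarrow 0,$$
giving a triangle in $\mathbf{D}_{\rm sg}([N]A)$; since ${\rm Im}\,\bar{i_*}$ is a triangulated subcategory, it is enough to see that $q(\begin{pmatrix}V\\ 0\end{pmatrix})$ lies in it. But $\begin{pmatrix}V\\ 0\end{pmatrix}$ is annihilated by the ideal $\begin{pmatrix}0&N\\0&A\end{pmatrix}$ and hence is a $D$-module; as $D$ is simple (semisimple), $\begin{pmatrix}V\\ 0\end{pmatrix}$ is a direct sum of copies of the simple projective $\begin{pmatrix}S\\0\end{pmatrix}$, so it is projective as an $[N]A$-module and $q(\begin{pmatrix}V\\ 0\end{pmatrix})=0$. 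This completes the proof.

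The only subtle point — the place I would expect to be the main obstacle — is making sure the one-sided inverse really forces full faithfulness: one needs that $\bar{i_*}$ has \emph{some} adjoint on the appropriate side whose composite with $\bar{i_*}$ is the identity, and that this composite is the unit (or counit) of that adjunction, so that \cite[Lemma 1.2]{Or04} (or its dual) applies verbatim. Since $j$ is exact this is cleaner than in Proposition \ref{prop:ope}, where the derived functor $\mathbf{L}^b j$ had to be built by hand; here $\bar j$ is genuinely adjoint to $\bar{i_*}$ and the unit $q'\,({\rm Id}\rightarrow j\,i)$ survives to the singularity category, so the argument goes through without the cohomological-dimension bookkeeping.
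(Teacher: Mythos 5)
There are two genuine errors in your proposal, both concerning how left $[N]A$-modules decompose; the overall strategy (exact left adjoint $j$, so no derived-functor bookkeeping; counit isomorphism gives full faithfulness; a short exact sequence gives density) is correct and is the one the paper uses.

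First, it is \emph{not} true that $i$ sends projectives to projectives, and $\begin{pmatrix}0\\ P\end{pmatrix}$ is not a direct summand of the corresponding projective $[N]A$-module. If $P=Ae$ is an indecomposable projective $A$-module, the associated indecomposable projective $[N]A$-module is $[N]A\cdot e=\begin{pmatrix}Ne\\ Ae\end{pmatrix}$ with structure map the canonical isomorphism $N\otimes_A Ae\stackrel{\sim}\to Ne$. Since that structure map is an isomorphism, $\begin{pmatrix}0\\ Ae\end{pmatrix}$ (which would need the structure map to vanish) is not even a submodule of it unless $Ne=0$; rather it is the cokernel of the inclusion $\begin{pmatrix}Ne\\ 0\end{pmatrix}\hookrightarrow\begin{pmatrix}Ne\\ Ae\end{pmatrix}$. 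You seem to have transferred the situation of Proposition \ref{prop:ope}, where indeed $i(A)$ is projective, to the coextension, where it is not. The paper's substitute fact is that $i(A)$ has projective dimension at most one (via the exact sequence $0\to\begin{pmatrix}N\\0\end{pmatrix}\to\begin{pmatrix}N\\A\end{pmatrix}\to i(A)\to 0$, with the kernel projective since $D$ is semisimple); that is what makes $i_*$ preserve perfect complexes.

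Second, your short exact sequence is reversed for the same reason. In a module $\begin{pmatrix}V\\ X\end{pmatrix}_\phi$ over $[N]A$, the piece $\begin{pmatrix}V\\ 0\end{pmatrix}$ is the \emph{sub}module (since $N\otimes_A 0=0$ trivially lands in $V$) and $i(X)=\begin{pmatrix}0\\ X\end{pmatrix}$ is the \emph{quotient}; $i(X)$ is a submodule only when $\phi=0$. The correct sequence is $0\to\begin{pmatrix}V\\ 0\end{pmatrix}\to\begin{pmatrix}V\\ X\end{pmatrix}\to i(X)\to 0$, as in the paper. Your observation that $\begin{pmatrix}V\\ 0\end{pmatrix}$ is projective (a sum of copies of the simple projective $\begin{pmatrix}S\\0\end{pmatrix}$) is correct, and once the sequence is put in the right order, the density argument goes through exactly as you intend. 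Also note, as a minor point, that with $j$ \emph{left} adjoint to $i$ the natural isomorphism $ji\cong\mathrm{Id}$ is the counit, not the unit; this does not affect the argument.
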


\begin{proof}
We observe that $i(A)$ has projective dimension at most one. Then the obviously induced functor $i_*\colon \mathbf{D}^b(A\mbox{-mod}) \rightarrow \mathbf{D}^b([N]A\mbox{-mod})$ preserves perfect complexes, and it induces the required functor $\bar{i_*}\colon \mathbf{D}_{\rm sg}(A)\rightarrow \mathbf{D}_{\rm sg}([N]A)$.

The functor $i$ admits an exact left adjoint $j\colon [N]A\mbox{-mod}\rightarrow A\mbox{-mod}$, which sends $\begin{pmatrix} V\\X \end{pmatrix}$ to $X$; moreover, $j$ preserves projective modules. Then it induces a triangle functor $\bar{j_*}\colon \mathbf{D}_{\rm sg}([N]A)\rightarrow \mathbf{D}_{\rm sg}(A)$, which is
left adjoint to $\bar{i_*}$. Then as in the proof of Proposition \ref{prop:ope}, we have  that $\bar{i_*}$ is fully faithful. The denseness of $\bar{i_*}$  follows from the natural exact sequence
$$0 \longrightarrow \begin{pmatrix} V\\ 0\end{pmatrix}\longrightarrow \begin{pmatrix} V \\ X\end{pmatrix}\longrightarrow i(X)\longrightarrow 0,$$
for each $[N]A$-module $\begin{pmatrix} V \\ X\end{pmatrix}$, and that the module $\begin{pmatrix} V\\ 0\end{pmatrix}$ is projective.  We omit the details.
\end{proof}

We use the above two propositions to reduce the study of the singularity category of arbitrary artin algebras
to cyclic-like ones.

Let $A$ be an artin algebra. Consider the valued quiver $Q_A$. A vertex $e$ is called \emph{cyclic} provided that
there is an oriented cycle containing it, and the corresponding simple $A$-module is called \emph{cyclic}. More generally,  a vertex $e$ is called \emph{cyclic-like} provided that there is a path through $e$,
which starts with a cyclic vertex and ends at a cyclic vertex, while the corresponding simple
$A$-module is  called \emph{cyclic-like}.   An artin algebra $A$ is called \emph{cyclic-like} provided that
its valued quiver $Q_A$ is cyclic-like. This is equivalent to that $A$ has neither simple projective nor simple injective modules.

For an artin algebra $A$, its \emph{cyclicization} is an artin algebra $A_c$ which is either simple or cyclic-like, such that
there is a sequence $A_c=A_0, A_1, \cdots, A_r=A$ with each $A_{i+1}$ is a one-point (co)extension of $A_i$.

The following is an immediate consequence of the definition.

\begin{lem}\label{lem:cyclic}
Let $A$ be an artin algebra with its cyclicization $A_c$. Then we have a triangle equivalence
$$\mathbf{D}_{\rm sg}(A_c)\simeq \mathbf{D}_{\rm sg}(A). $$
\end{lem}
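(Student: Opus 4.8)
The plan is to argue by induction on the length $r$ of the defining sequence $A_c=A_0, A_1, \ldots, A_r=A$ connecting the cyclicization to the algebra itself. By the very definition of cyclicization such a finite sequence exists, and each step $A_{i}\rightsquigarrow A_{i+1}$ is by hypothesis a one-point extension or a one-point coextension of $A_i$.

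The base case $r=0$ is trivial, since then $A_c=A$ and the identity functor gives the desired triangle equivalence. For the inductive step, suppose the claim holds for sequences of length $r-1$; applying it to the subsequence $A_c=A_0,\ldots,A_{r-1}$ yields a triangle equivalence $\mathbf{D}_{\rm sg}(A_c)\simeq \mathbf{D}_{\rm sg}(A_{r-1})$. Now $A=A_r$ is either a one-point extension of $A_{r-1}$, in which case Proposition \ref{prop:ope} supplies a triangle equivalence $\mathbf{D}_{\rm sg}(A_{r-1})\simeq\mathbf{D}_{\rm sg}(A)$, or a one-point coextension of $A_{r-1}$, in which case Proposition \ref{prop:opce} supplies the same. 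Composing with the equivalence from the inductive hypothesis gives $\mathbf{D}_{\rm sg}(A_c)\simeq\mathbf{D}_{\rm sg}(A)$, completing the induction.

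There is essentially no obstacle here: the statement is a formal bookkeeping consequence of the two preceding propositions together with the definition of cyclicization, and the only thing to be careful about is that a composition of triangle equivalences is again a triangle equivalence, which is immediate. One might add a remark that the equivalence so obtained is, up to natural isomorphism, induced by the composite of the exact embeddings $i$ appearing in Propositions \ref{prop:ope} and \ref{prop:opce}, but this is not needed for the statement as phrased.

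\begin{proof}
By definition of the cyclicization, there is a sequence $A_c=A_0, A_1, \cdots, A_r=A$ such that each $A_{i+1}$ is a one-point extension or a one-point coextension of $A_i$. We argue by induction on $r$. If $r=0$, then $A_c=A$ and there is nothing to prove. Assume $r\geq 1$. Applying the inductive hypothesis to the sequence $A_0, A_1, \cdots, A_{r-1}$, which exhibits $A_{r-1}$ as obtained from $A_c$ by a sequence of one-point (co)extensions, we obtain a triangle equivalence $\mathbf{D}_{\rm sg}(A_c)\simeq \mathbf{D}_{\rm sg}(A_{r-1})$. Since $A=A_r$ is a one-point extension or a one-point coextension of $A_{r-1}$, Proposition \ref{prop:ope} or Proposition \ref{prop:opce} yields a triangle equivalence $\mathbf{D}_{\rm sg}(A_{r-1})\simeq \mathbf{D}_{\rm sg}(A)$. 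Composing the two equivalences completes the proof.
\end{proof}
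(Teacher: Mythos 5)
Your proof is correct and takes essentially the same approach as the paper, which simply says to apply Propositions \ref{prop:ope} and \ref{prop:opce} repeatedly; you have merely formalized that "repeatedly" as an induction on the length of the defining sequence.
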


\begin{proof}
Apply Propositions \ref{prop:ope} and \ref{prop:opce}, repeatedly.
\end{proof}

The following result seems to be well known.

\begin{lem} The following statements hold.
\begin{enumerate}
\item Each artin algebra has a cyclicization.
\item Let $A_c$ and $A_{c'}$ be two cyclicizations of $A$. Then if $A_c$ is simple, so is $A_{c'}$. Otherwise,
we have an isomorphism  $A_c\simeq A_{c'}$ of algebras.
\end{enumerate}
\end{lem}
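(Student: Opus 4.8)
The plan is to prove both statements by an induction on the number of vertices of the valued quiver $Q_A$, exploiting the combinatorial structure of cyclic-like quivers and the fact that the one-point (co)extension operations are exactly the operations of adjoining a source or a sink. First I would establish statement (1). If $A$ is already simple or cyclic-like, take $A_c=A$. Otherwise $A$ has a simple projective or simple injective module, corresponding to a source or a sink $e$ in $Q_A$. Removing $e$ (together with all arrows incident to it) realizes $A$ as a one-point extension $A'[M]$ or one-point coextension $[N]A'$ of an artin algebra $A'$ with strictly fewer vertices; here one needs to note that deleting a source $e$ means the indecomposable projective $P_e$ is simple, and $A$ is the one-point extension of $A' = A/Ae_AA$ (or rather the algebra obtained by deleting that idempotent) by the bimodule $M$ given by the arrows out of $e$. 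Iterating and appealing to the decrease of vertex count, we reach an algebra that is simple or cyclic-like; that is $A_c$.

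The more delicate point is statement (2), the essential uniqueness. The key observation is that the sources and sinks we remove are intrinsic to $Q_A$: a vertex $e$ lies in the ``non-cyclic-like part'' of $Q_A$ precisely when $e$ can be deleted by some sequence of source/sink removals, and the cyclic-like vertices are exactly those that survive every such sequence. I would make this precise as follows: call a vertex \emph{peelable} if it is not cyclic-like (in the sense defined in the excerpt, i.e.\ it does not lie on any path joining two cyclic vertices). One shows that the full subquiver $Q_A^{\mathrm{cl}}$ on the cyclic-like vertices is exactly the quiver left after removing all peelable vertices, \emph{regardless of the order} in which sources and sinks are peeled --- this is a confluence (diamond-lemma type) argument: if two different peeling steps are available, performing them in either order leads to the same intermediate quiver, and any maximal peeling sequence terminates at $Q_A^{\mathrm{cl}}$. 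Once the underlying valued quiver of $A_c$ is thus shown to be independent of choices (it is $Q_A^{\mathrm{cl}}$, which is empty iff $A$ is semisimple, in which case, being simple or cyclic-like with no vertices forces $A_c$ simple), it remains to recover the algebra from its position in the peeling: since each deleted vertex is a source or sink with a \emph{simple} projective or injective, the idempotent ideal structure shows that $A_c$ is, up to isomorphism, the ``condensation'' $e_{\mathrm{cl}} A e_{\mathrm{cl}}$ of $A$ at the sum $e_{\mathrm{cl}}$ of the primitive idempotents corresponding to cyclic-like vertices, which manifestly does not depend on the peeling order.

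I would expect the main obstacle to be the bookkeeping in the confluence argument for (2): one must check that adjoining/removing a source can never create or destroy a sink elsewhere in a way that changes which vertices are ultimately peelable, and that the notion ``cyclic-like'' is stable under the peeling operations (a vertex that is cyclic-like in $Q_A$ remains cyclic-like in any quiver obtained by removing a source or sink, because removing a source or sink removes no oriented cycle and no path between two cyclic vertices). Granting that stability, the terminal quiver of any maximal peeling sequence has no source and no sink, hence is cyclic-like or empty, and equals $Q_A^{\mathrm{cl}}$ by a straightforward double inclusion. The passage from quivers back to algebras is then routine using the standard description of one-point (co)extensions via triangular matrix algebras from \cite[III.2]{ARS}, together with the fact that a simple projective (resp.\ injective) module pins down the corner algebra up to isomorphism.
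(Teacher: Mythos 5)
Correct, and essentially the paper's own argument: peel sources (simple injectives, giving one-point extensions) and sinks (simple projectives, giving one-point coextensions) to prove existence, then observe that each peeling step leaves the set of cyclic-like vertices unchanged, which already forces the terminal vertex set and lets you identify $A_c$ intrinsically. The only real divergence is the final intrinsic description of $A_c$: you realize it as the corner algebra $e_{\mathrm{cl}}Ae_{\mathrm{cl}}$ at the sum of primitive idempotents over cyclic-like vertices, whereas the paper describes $A_c\mbox{-mod}$ as the smallest Serre subcategory of $A\mbox{-mod}$ containing the cyclic-like simples together with a multiplicity count; these agree, because for a source or sink idempotent $f$ one has $fBe=0$ or $eBf=0$ (with $e=1-f$), so $eBe\cong B/BfB$, and iterating gives $e_{\mathrm{cl}}Ae_{\mathrm{cl}}\cong A/A(1-e_{\mathrm{cl}})A$, whose module category is exactly that Serre subcategory. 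Your confluence/diamond-lemma detour is harmless but superfluous: as you note yourself, invariance of the cyclic-like set plus the fact that a finite valued quiver with no source and no sink is cyclic-like already pins down the terminal quiver without any local-confluence bookkeeping.
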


\begin{proof}
(1) It follows from the well-known fact that the existence of a simple injective (\emph{resp.} projective) module of $A$ implies that $A$ is a one-point extension (\emph{resp.} coextension) of $A'$. Moreover, the valued quiver $Q_{A'}$ of $A'$ is obtained from the one of $A$ by deleting the relevant source (\emph{resp.} sink).

(2) The first statement follows from the observation that passing from $A$ to $A'$ in (1),  the set of cyclic-like
vertices stays the same.

For the isomorphism of algebras, it suffices to observe that $A_c\mbox{-mod}$ is equivalent to the smallest
Serre subcategory (\cite[Chapter 15]{Fai73}) of $A\mbox{-mod}$ containing the cyclic-like simple $A$-modules $S$; moreover, the multiplicity of $P_{A_c}(S)$ in the indecomposable decomposition of $A_c$ equals the multiplicity of $P(S)$ in the one of
$A$. Here, $P(S)$ and $P_{A_c}(S)$ denote the projective cover of $S$ as an $A$-module and $A_c$-module, respectively.
\end{proof}

%


\section{Hom-finiteness of  singularity categories}

In this section, we study the Hom-finiteness of the singularity category of an artin algebra with radical
square zero, and  prove Theorem C.

Throughout, $A$ is an artin $R$-algebra such that
its Jacobson radical $\mathbf{r}$ satisfies $\mathbf{r}^2=0$. Recall that in this case, the syzygy $\Omega(X)$
of any module $X$ is semisimple.

\begin{lem}\label{lem:injective}
Suppose that $A$ is cyclic-like. Then we have
\begin{enumerate}
\item each simple $A$-module has infinite projective dimension;
\item for each $i\geq 0$, the algebra homomorphism ${\rm End}_{A/\mathbf{r}}(\mathbf{r}^{\otimes i})\rightarrow {\rm End}_{A/\mathbf{r}}(\mathbf{r}^{\otimes i+1})$ induced by $\mathbf{r}\otimes_{A/\mathbf{r}}-$ is injective.
\end{enumerate}
\end{lem}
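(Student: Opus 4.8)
The plan is to prove the two statements by exploiting the structure of a cyclic-like algebra with radical square zero, in particular the fact that $A$ has no simple injective or simple projective modules. Translating these two conditions into quiver language: in the valued quiver $Q_A$, every vertex has at least one arrow starting at it (no simple injective means, combinatorially, that there is an arrow out of every vertex once we recall that $\mathrm{Ext}^1_A(S_i,S_j)$ detects arrows and that the socle of a projective indecomposable is governed by these arrows), and every vertex has at least one arrow ending at it (no simple projective). Equivalently, writing $\mathbf{r}$ as the $A/\mathbf{r}$-$A/\mathbf{r}$-bimodule governing the arrows, for each simple $S$ we have $\mathbf{r}\otimes_{A/\mathbf{r}} S\neq 0$ and $\mathrm{Hom}_{A/\mathbf{r}}(\mathbf{r},S)\neq 0$, i.e.\ $S$ occurs in the bottom of $\mathbf{r}$ as a left module and in the top as a right module.

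For part (1), I would argue by contradiction: suppose some simple module $S$ has finite projective dimension. By Lemma~\ref{lem:omega}, $\Omega(S)\simeq \mathbf{r}\otimes_{A/\mathbf{r}}S$ in the stable category, and since syzygies of arbitrary modules are semisimple, finite projective dimension of $S$ would force $\Omega^k(S)=0$ for some $k$, hence $\mathbf{r}^{\otimes k}\otimes_{A/\mathbf{r}} S = 0$ (at least stably, and since $\mathbf{r}^{\otimes k}\otimes_{A/\mathbf{r}}S$ is semisimple, being stably zero means genuinely projective, hence zero as it is semisimple without projective summands — here one uses that a cyclic-like algebra has no simple projective summand). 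Now $\mathbf{r}^{\otimes k}\otimes_{A/\mathbf{r}}S = 0$ means that iterating $\mathbf{r}\otimes_{A/\mathbf{r}}-$ eventually kills $S$; combinatorially this says there is no infinite path in $Q_A$ starting at $S$. But in a cyclic-like quiver every vertex lies on a path connecting two cyclic vertices, and a cyclic vertex lies on an oriented cycle, so from $S$ one can always extend a path forward forever (follow the path to a cyclic vertex, then go around the cycle). This contradiction gives the claim. I would phrase this cleanly by noting that $\mathbf{r}^{\otimes i}\otimes_{A/\mathbf{r}}S\neq 0$ for all $i$ is exactly the statement that some arrow leaves every vertex reachable from $S$.

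For part (2), the homomorphism ${\rm End}_{A/\mathbf{r}}(\mathbf{r}^{\otimes i})\to {\rm End}_{A/\mathbf{r}}(\mathbf{r}^{\otimes i+1})$ sends $f$ to $\mathrm{id}_\mathbf{r}\otimes f$, so its kernel consists of those $f$ with $\mathbf{r}\otimes_{A/\mathbf{r}} f = 0$. Decomposing $\mathbf{r}^{\otimes i} = \bigoplus_j S_j^{a_j^{(i)}}$ and using that the algebra is split enough that $\mathrm{End}$ is block-diagonal over the division algebras $\Delta_j$, it suffices to show that if a simple summand $S_j$ of $\mathbf{r}^{\otimes i}$ has $\mathbf{r}\otimes_{A/\mathbf{r}}S_j = 0$ then $S_j$ cannot actually occur — but by part (1) (or rather its proof) every simple module $S$ has $\mathbf{r}\otimes_{A/\mathbf{r}}S\neq 0$, since a cyclic-like quiver has every vertex with an outgoing arrow. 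More precisely, $\mathbf{r}\otimes_{A/\mathbf{r}}-$ is faithful on the category of semisimple modules: if $\mathbf{r}\otimes_{A/\mathbf{r}}f = 0$ for $f\colon X\to Y$ between semisimples, then composing with projections and inclusions reduces to the case $X=Y=S$ simple and $f$ a scalar, and since $\mathbf{r}\otimes_{A/\mathbf{r}}S\neq 0$ and $\mathbf{r}\otimes_{A/\mathbf{r}}-$ is additive over a division algebra, $f=0$. Applying this to $X=Y=\mathbf{r}^{\otimes i}$ gives injectivity.

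The main obstacle I anticipate is keeping the stable-versus-honest distinctions straight in part (1): Lemma~\ref{lem:omega} only gives $\Omega\simeq \mathbf{r}\otimes_{A/\mathbf{r}}-$ \emph{on the stable category} $A\mbox{-\underline{ssmod}}$, so I must be careful to note that $\mathbf{r}\otimes_{A/\mathbf{r}}S$ has no projective (hence no nonzero) summands for $S$ simple when $A$ is cyclic-like — this is where cyclic-likeness really enters, ruling out simple projective modules that would otherwise make "stably zero" compatible with "nonzero". Once that point is made carefully, both parts reduce to the elementary combinatorial fact that in a cyclic-like valued quiver every vertex has both an incoming and an outgoing arrow, which is immediate from the definition of cyclic-like.
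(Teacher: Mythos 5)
Your proof is correct and rests on the same core observation as the paper: cyclic-likeness excludes simple projective modules, which (via semisimplicity of syzygies for radical-square-zero algebras) forces infinite projective dimension in (1) and the faithfulness of $\mathbf{r}\otimes_{A/\mathbf{r}}-$ on semisimple modules in (2). Two minor remarks: for (1) the paper's argument is more direct---it simply uses $\operatorname{proj.dim}\,\Omega(S) = \operatorname{proj.dim}\,S - 1$ to land on a nonzero semisimple projective module, without the detour through infinite paths in $Q_A$---and your preamble has the source/sink dictionary reversed (a simple projective $S$ is a \emph{sink} of $Q_A$, i.e.\ $\mathbf{r}\otimes_{A/\mathbf{r}}S = 0$, while a simple injective is a \emph{source}), though this slip does not affect your argument since cyclic-likeness excludes both.
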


 \begin{proof}
 (1) Recall that a cyclic-like algebra does not have simple projective modules. Then the statement follows
 from the observation that for a simple module $S$ with finite projective dimension, we have that ${\rm proj.dim}\; \Omega(S)={\rm proj.dim}\; S-1$.

 (2) We recall that $A\mbox{-\underline{ssmod}}$ is the full subcategory of $A\mbox{-\underline{mod}}$ consisting of semisimple modules. Then by (1),  $A\mbox{-\underline{ssmod}}$ is naturally equivalent to $A\mbox{-ssmod}$, and the syzygy functor $\Omega\colon A\mbox{-\underline{ssmod}}\rightarrow A\mbox{-\underline{ssmod}} $ is faithful. Now the result follows from Lemma \ref{lem:omega}.
 \end{proof}

Recall that the singularity category $\mathbf{D}_{\rm sg}(A)$ is naturally $R$-linear. We are interested in the problem when it is Hom-finite, that is, all the Hom sets are finitely generated $R$-modules.

\begin{thm}\label{thm:C}
Let $A$ be an artin algebra with radical square zero. Then the following statements are equivalent:
\begin{enumerate}
\item the singularity category $\mathbf{D}_{\rm sg}(A)$ is Hom-finite;
\item the associated regular algebra $\Gamma(A)$ is semisimple;
\item the cyclicization $A_c$ of $A$ is either simple or isomorphic to a finite product of
self-injective algebras.
\end{enumerate}
\end{thm}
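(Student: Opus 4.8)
The plan is to prove the cycle of implications $(1)\Rightarrow(2)\Rightarrow(3)\Rightarrow(1)$, using Theorem B to translate the Hom-finiteness of $\mathbf{D}_{\rm sg}(A)$ into a statement about $\Gamma(A)$, and using Lemma \ref{lem:cyclic} to reduce everything to the cyclic-like (or simple) case. The key bridge is that, by Theorem \ref{thm:B}, $\mathbf{D}_{\rm sg}(A)\simeq ({\rm proj}\;\Gamma(A),\Sigma_A)$ and this equivalence sends $q(A/\mathbf{r})$ to $\Gamma(A)$; since by the proof of Proposition \ref{prop:sg} the object $q(A/\mathbf{r})$ generates $\mathbf{D}_{\rm sg}(A)$ (indeed $\Omega^1(A\mbox{-mod})\subseteq{\rm add}(A\oplus A/\mathbf{r})$), the category $\mathbf{D}_{\rm sg}(A)$ is Hom-finite if and only if ${\rm End}_{\mathbf{D}_{\rm sg}(A)}(q(A/\mathbf{r}))\cong\Gamma(A)$ is a finitely generated $R$-module, i.e.\ $\Gamma(A)$ is an artin $R$-algebra. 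Thus $(1)$ is equivalent to: $\Gamma(A)$ is an artin algebra. Now a von Neumann regular artin algebra is semisimple (a regular ring is semisimple iff it is left Noetherian, \cite[Theorem 11.24]{Fai73}), which gives $(1)\Leftrightarrow(2)$ once one checks the finitely-generated-over-$R$ condition matches being artin — this is where I would be slightly careful, but it is routine since $R$ is artinian.

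For $(2)\Rightarrow(3)$: by Lemma \ref{lem:cyclic}, $\Gamma(A)=\Gamma(A_c)$ (one checks the cyclicization does not change the associated regular algebra, since adjoining a source or sink to the valued quiver amounts to passing to a one-point (co)extension, which does not affect $\mathbf{r}^{\otimes i}$ in the limit — alternatively use $\mathbf{D}_{\rm sg}(A)\simeq\mathbf{D}_{\rm sg}(A_c)$ and the identification $\Gamma=\mathrm{End}$ above). So we may assume $A$ is cyclic-like (or simple; the simple case is trivial). By Lemma \ref{lem:injective}(2) the chain ${\rm End}_{A/\mathbf{r}}(\mathbf{r}^{\otimes i})\to{\rm End}_{A/\mathbf{r}}(\mathbf{r}^{\otimes i+1})$ consists of injective algebra homomorphisms between semisimple algebras, so $\Gamma(A)$ semisimple forces this chain to stabilize: ${\rm End}_{A/\mathbf{r}}(\mathbf{r}^{\otimes i})\cong\Gamma(A)$ for $i\gg0$, hence in fact for all $i\geq i_0$ the bimodule map $\mathbf{r}\otimes_{A/\mathbf{r}}-$ is an isomorphism on endomorphism algebras. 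Combined with Lemma \ref{lem:omega} ($\Omega\simeq\mathbf{r}\otimes_{A/\mathbf{r}}-$ on $A\mbox{-\underline{ssmod}}$) and Lemma \ref{lem:injective}(1) (faithfulness of $\Omega$ on semisimple modules), this says the syzygy functor $\Omega$ restricted to semisimple modules is eventually an equivalence; I would then argue that this periodicity of syzygies of simple modules, for a radical-square-zero cyclic-like algebra, forces every indecomposable projective to be also injective — i.e.\ $A_c$ is self-injective — and then $A_c$ is a finite product of indecomposable (hence connected) self-injective algebras. Concretely, one translates the stabilization into the statement that the adjacency structure of the separated quiver / the valued quiver $Q_{A_c}$ is such that every vertex has equal in- and out-degree data matching projective=injective; this is essentially the classical characterization of radical-square-zero self-injective algebras via their quivers.

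For $(3)\Rightarrow(1)$: if $A_c$ is simple then $\mathbf{D}_{\rm sg}(A_c)=0$ and we are done via Lemma \ref{lem:cyclic}; if $A_c$ is a finite product of self-injective algebras, then by Buchweitz--Happel (\cite{Buc,Hap91}), or directly Rickard (\cite{Ric}), $\mathbf{D}_{\rm sg}(A_c)$ is triangle equivalent to the stable module category $A_c\mbox{-\underline{mod}}$, which is Hom-finite because each $\underline{\rm Hom}_{A_c}(M,N)$ is a subquotient of the finitely generated $R$-module ${\rm Hom}_{A_c}(M,N)$; then Lemma \ref{lem:cyclic} transports Hom-finiteness back to $\mathbf{D}_{\rm sg}(A)$. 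The main obstacle I expect is step $(2)\Rightarrow(3)$: extracting, from the bare fact that the chain of semisimple algebras ${\rm End}_{A/\mathbf{r}}(\mathbf{r}^{\otimes i})$ stabilizes, the precise combinatorial conclusion that $A_c$ is self-injective. The cleanest route is probably to phrase the stabilization as: for $i$ large, $\mathbf{r}^{\otimes i}$ and $\mathbf{r}^{\otimes i+1}$ have the same multiplicities of each simple $A/\mathbf{r}$-module (this is what equality of the semisimple endomorphism algebras means), hence the nonnegative integer matrix encoding $\mathbf{r}$ as an $A/\mathbf{r}$-bimodule (the valued adjacency matrix of $Q_{A_c}$) has eventually-periodic powers with no growth, which for a matrix with no zero rows or columns (cyclic-like!) forces it to be a permutation-type matrix up to the division-algebra data — exactly the condition that $Q_{A_c}$ is a disjoint union of oriented cycles with trivial valuation, equivalently $A_c$ self-injective. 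I would isolate this as a small linear-algebra lemma and feed it the input from Lemma \ref{lem:injective}.
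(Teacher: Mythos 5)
Your plan is essentially sound and reaches the theorem, but it follows a genuinely different decomposition than the paper, and it glosses over one point that is actually the crux of the argument.

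\textbf{Where you diverge from the paper.} You prove the cycle $(1)\Rightarrow(2)\Rightarrow(3)\Rightarrow(1)$; the paper instead establishes $(1)\Leftrightarrow(2)$ and $(1)\Leftrightarrow(3)$ separately, and its hard implication is $(1)\Rightarrow(3)$, \emph{not} your $(2)\Rightarrow(3)$. The paper's $(1)\Rightarrow(3)$ is a purely categorical argument: it reduces to $A$ cyclic-like, uses Corollary~\ref{cor:idempotent} to get that $\mathbf{D}_{\rm sg}(A)$ is Krull--Schmidt, and then argues that if some $\Omega(S)$ were not simple, the composition lengths of $\Omega^{nr}(S)$ would diverge while $q(\Omega^{nr}(S))[nr]\simeq q(S)$ stays fixed (and is nonzero by Lemma~\ref{lem:injective}(1)), contradicting the Krull--Schmidt property. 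Your $(2)\Rightarrow(3)$ replaces this with a multiplicity/matrix argument: the injective chain of semisimple endomorphism algebras stabilizes, so the multiplicity vector $(m_{i,j})$ of simples in $\mathbf{r}^{\otimes i}$ is bounded, so the integer matrix $a_{jk}=[\Omega(S_j):S_k]$ (which has no zero rows or columns because $A$ is cyclic-like) has bounded iterates, forcing all row sums to be $1$, hence $A$ a permutation matrix, hence $Q_{A_c}$ a disjoint union of oriented cycles. This is a legitimate, more computational route; it trades the categorical Krull--Schmidt argument for a linear-algebra one. Both then invoke the symmetrization of the valuations to get triviality, and both use Rickard for $(3)\Rightarrow(1)$, so those parts coincide.

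\textbf{Where you should be more careful.} Two points.

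First, you dismiss as ``routine'' the passage from ``$\Gamma(A)$ is a semisimple ring'' to ``$\Gamma(A)$ is an artin $R$-algebra,'' but it is not: a semisimple ring need not be module-finite over $R$, and this is precisely where the substance of the paper's $(2)\Rightarrow(1)$ lives. The needed chain of ideas is: reduce to $A_c$ via Lemma~\ref{lem:cyclic} and Proposition~\ref{prop:sg}, so that $\Gamma(A_c)$ is Morita equivalent to $\Gamma(A)$ and hence also semisimple; use Lemma~\ref{lem:injective}(2) so that all the maps ${\rm End}_{A_c/\mathbf{r}_c}(\mathbf{r}_c^{\otimes i})\hookrightarrow \Gamma(A_c)$ are injective; then use the bound on the number of pairwise orthogonal idempotents in the semisimple ring $\Gamma(A_c)$ to bound the multiplicities $\sum_j m_{i,j}$, hence the $R$-lengths of the ${\rm End}$-algebras, so the chain stabilizes and $\Gamma(A_c)$ is artin over $R$. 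You do cite Lemma~\ref{lem:injective}(2) later, but your $(1)\Leftrightarrow(2)$ sentence as written treats the only genuinely nontrivial step of that equivalence as a formality. (Since your overall scheme is the cycle $(1)\Rightarrow(2)\Rightarrow(3)\Rightarrow(1)$, you could simply not claim $(1)\Leftrightarrow(2)$ at that stage and only record $(1)\Rightarrow(2)$, which \emph{is} easy via $\Gamma(A)\simeq{\rm End}_{\mathbf{D}_{\rm sg}(A)}(q(A/\mathbf{r}))$ — but the stabilization argument is still needed inside your $(2)\Rightarrow(3)$.)

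Second, your intermediate claim that isomorphism of the semisimple endomorphism algebras ${\rm End}(\mathbf{r}^{\otimes i})\cong{\rm End}(\mathbf{r}^{\otimes i+1})$ implies the multiplicity vectors $(m_{i,j})$ and $(m_{i+1,j})$ are \emph{equal} is slightly too strong: Artin--Wedderburn only matches the multisets $\{(m_{i,j},\Delta_j)\}$ up to reordering, and distinct simples may have isomorphic endomorphism division rings. What you actually need, and do have, is that the multiplicities are uniformly \emph{bounded}; feeding that into the length inequality $\sum_k m_{i+1,k}=\sum_j m_{i,j}\,s_j$ (with $s_j$ the $j$-th row sum of $A$ and $m_{i,j}\geq 1$) already forces $s_j=1$ for all $j$, so your ``permutation matrix'' conclusion survives with only a cosmetic rephrasing of the intermediate step.

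With those two adjustments, your proposal is a valid alternative proof; its matrix formulation is arguably more elementary, while the paper's Krull--Schmidt argument makes no reference to the concrete structure of $\Gamma(A)$ and hence sits more naturally inside the singularity-category formalism.
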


We point out that the cyclicization $A_c$ of $A$ is necessarily with radical square zero.
Recall that an indecomposable non-simple artin algebra with radical square zero is   self-injective
if and only if its valued quiver is an oriented cycle with the trivial valuation;
see \cite[Proposition IV.2.16]{ARS} or the proof of \cite[Corollary 1.3]{Ch11}. Then the statement (3)
above is equivalent to the corresponding one in Theorem C.

\begin{proof} Recall from Proposition \ref{prop:K^n(A)} the isomorphism  $\Gamma(A)\simeq {\rm End}_{\mathbf{D}_{\rm sg}(A)}(q(A/\mathbf{r}))$.
Then we have the implication ``(1)$\Rightarrow$(2)", since an artin regular algebra is necessarily semisimple.

For ``(2)$\Rightarrow$(1)", consider the cyclicization $A_c$ of $A$, whose Jacobson radical
is denoted by $\mathbf{r}_c$.  Then by Lemma \ref{lem:cyclic} we have an equivalence
 $\mathbf{D}_{\rm sg}(A_c)\simeq \mathbf{D}_{\rm sg}(A)$. Applying Proposition \ref{prop:sg} we have an equivalence
 ${\rm proj}\; \Gamma(A_c)\simeq {\rm proj}\; \Gamma(A)$, that is, $\Gamma(A_c)$ and $\Gamma(A)$ are Morita equivalent.
 Then $\Gamma(A_c)$ is also semisimple. Recall that $\Gamma(A_c)=\varinjlim {\rm End}_{A_c/\mathbf{r}_c}(\mathbf{r}_c^{\otimes i})$. By Lemma \ref{lem:injective}
all the canonical maps ${\rm End}_{A_c/\mathbf{r}_c}(\mathbf{r}_c^{\otimes i})\rightarrow \Gamma(A_c)$
are injective. Recall that for a semisimple algebra, the number of pairwise orthogonal idempotents is bounded. Then the $R$-lengths  of the algebras ${\rm End}_{A_c/\mathbf{r}_c}(\mathbf{r}_c^{\otimes i})$ are uniformly bounded. Consequently, the algebra $\Gamma(A)$ is an artin $R$-algebra. By Proposition \ref{prop:sg} the singularity category $\mathbf{D}_{\rm sg}(A_c)$ is Hom-finite. Then we are done by  Lemma \ref{lem:cyclic}.

Recall from \cite[Theorem 2.1]{Ric} that the singularity category of a self-injective algebra is equivalent to its stable category. In particular, it is Hom-finite. Then the implication ``(3)$\Rightarrow$(1)" follows from Lemma \ref{lem:cyclic}.

It remains to show ``(1)$\Rightarrow$(3)". Without loss of generality, we assume that the algebra $A$ is cyclic-like
such that $\mathbf{D}_{\rm sg}(A)$ is Hom-finite. We will show that $A$ is self-injective.

We claim that the sysygy $\Omega(S)$ of any cyclic simple $A$-module $S$ is simple. Then
there is only one arrow starting at $S$ in $Q_A$, which is valued by $(1, b)$ for some natural number $b$. Since $Q_A$ is cyclic-like, this forces that $Q_A$ is a disjoint union of oriented cycles. In each oriented cycle, every arrow has valuation
$(1, b_i)$ for some $b_i$. Then the symmetrization condition implies that all these $b_i$'s are necessarily  one; compare the proof of \cite[Proposition VIII. 6.4]{ARS}. As we point out above, this implies that $A$ is self-injective.

 We prove the claim. Since by Corollary \ref{cor:idempotent} $\mathbf{D}_{\rm sg}(A)$ is idempotent split,
we have that $\mathbf{D}_{\rm sg}(A)$ is a Krull-Schmidt category (\cite[Appendix A]{CYZ}). In particular, each object is uniquely decomposed as a direct sum of finitely many indecomposable objects. We observe that for each semisimple module $X$,  $lX\leq l\Omega(X)$. Here, $l$ denotes the composition length. Consider a cyclic simple $A$-module $S$, and take a path $S=S_1\rightarrow S_2\rightarrow \cdots \rightarrow S_r \rightarrow S_{r+1}=S$ in $Q_A$.  Assume on the contrary that $l\Omega(S)\geq 2$. Then we have  $\Omega(S)=S_2\oplus X$ for some nonzero semisimple module $X$. Observe that $S$ is a direct summand of $\Omega^{r-1}(S_2)$, and
then we have $\Omega^r(S)=S\oplus X'$ for a nonzero semisimple module $X'$. Consequently, we have $\Omega^{nr}(S)=S\oplus X'\oplus\Omega^r(X')\oplus  \cdots \oplus  \Omega^{(n-1)r}(X')$. Then the lengths of the semisimple modules $\Omega^{nr}(S)$
tend to the infinity, when $n$ goes to the infinity. By Lemma \ref{lem:injective}(1),  $q(T)$ is not zero for any simple $A$-module $T$. Recall from  Lemma \ref{lem:exactsequence} that $q(S)\simeq q(\Omega^{nr}(S))[nr]$. This contradicts to the Krull-Schmidt property of $q(S)$, and we are done with the claim. \end{proof}

\vskip 5pt

 \noindent {\bf Acknowledgements.}\quad The author thanks Professor Zhaoyong Huang and Professor Yu Ye for their helpful comments.

\bibliography{}

\vskip 10pt

 {\footnotesize \noindent Xiao-Wu Chen, Department of
Mathematics, University of Science and Technology of
China, Hefei 230026, Anhui, PR China \\
Wu Wen-Tsun Key Laboratory of Mathematics, USTC, Chinese Academy of Sciences, Hefei 230026, Anhui, PR China.\\
URL: http://mail.ustc.edu.cn/$^\sim$xwchen}

\end{document}